\DeclareMathOperator{\Hom}{Hom}
\DeclareMathOperator{\ad}{ad}
\DeclareMathOperator{\ima}{Im}
\DeclareMathOperator{\Der}{Der}
\DeclareMathOperator{\Fact}{Fact}
\DeclareMathOperator{\Ext}{Ext}
\DeclareMathOperator{\Cext}{Cext}
\newcommand{\F}{\mathbb{F}}
\newcommand{\inv}{^{-1}}
\newcommand{\vp}{\varphi}
\newcommand{\LL}{L}
\newcommand{\alg}{\mathscr{P}}
\newcommand{\fa}{A}
\newcommand{\fb}{B}
\newcommand{\cZ}{\mathcal{Z}}
\newcommand{\cB}{\mathcal{B}}
\newcommand{\cH}{\mathcal{H}}
\newcommand{\cC}{\mathcal{C}}
\newcommand{\T}{\mathscr{T}}
\newcommand{\vv}{_{\vdash}}
\newcommand{\dd}{_{\dashv}}
\newtheorem{thm}{Theorem}[section]
\newtheorem{cor}[thm]{Corollary}
\theoremstyle{definition}
\newtheorem{defn}{Definition}
\theoremstyle{remark}
\title{Nonabelian Extensions and Factor Systems \\ for the Algebras of Loday}
\author{Erik Mainellis}
\date{}
\begin{document}

\maketitle

\begin{abstract}
    Factor systems are a tool for working on the extension problem of algebraic structures such as groups, Lie algebras, and associative algebras. Their applications are numerous and well-known in these common settings. We construct $\alg$ algebra analogues to a series of results from W. R. Scott’s \textit{Group Theory}, which gives an explicit theory of factor systems for the group case. Here $\alg$ ranges over Leibniz, Zinbiel, diassociative, and dendriform algebras, which we dub ``the algebras of Loday,'' as well as over Lie, associative, and commutative algebras. Fixing a pair of $\alg$ algebras, we develop a correspondence between factor systems and extensions. This correspondence is strengthened by the fact that equivalence classes of factor systems correspond to those of extensions. Under this correspondence, central extensions give rise to 2-cocycles while split extensions give rise to (nonabelian) 2-coboundaries.
\end{abstract}

\tableofcontents

\section{Introduction}
This paper is something of a tribute to Loday, who first generated interest in Leibniz algebras as a generalization of Lie algebras. Loday also defined dual Leibniz algebras \cite{loday cup product}, which later took the name Zinbiel algebras based on Loday's pen name G. W. Zinbiel, or ``Leibniz" backwards. He wrote under this name in ``Encyclopedia of Types of Algebras 2010'' \cite{zinbiel}, a paper that lists various algebras and some of their properties from an ``operadic point of view." Loday also introduced the notions of diassociative algebras (or associative dialgebras) and dendriform algebras in his paper ``Dialgebras'' \cite{loday dialgebras}. The algebras of Loday fit nicely into the following commutative diagram of functors between the categories Zinb, Dend, Com, As, Dias, Lie, and Leib of Zinbiel, Dendriform, Commutative, Associative, Diassociative, Lie, and Leibniz algebras respectively. Said diagram depicts the symmetry of their corresponding operads under the Koszul duality, which falls across a vertical line through the category As. We take this diagram from Loday \cite{loday dialgebras}, although it has also appeared in a number of other papers concerning operads.

\[\begin{tikzcd}[row sep=tiny]
& \text{Dend} \arrow[dr] & & \text{Dias} \arrow[dr] \\ \text{Zinb} \arrow[ur] \arrow[dr] & & \text{As} \arrow[dr] \arrow[ur] & & \text{Leib} \\
& \text{Com} \arrow[ur] & & \text{Lie}\arrow[ur]
\end{tikzcd}\]

The objective of this paper is to establish an explicit theory of factor systems, also known as nonabelian 2-cocycles, for the algebras of Loday. Factor systems are a classic tool for working on the extension problem of algebraic structures. These concepts originated in the group setting and can be traced back to Schreier's 1926 paper \cite{schreier}.\footnote{See also the introduction of \cite{brown} for some discussion of factor systems and the extension problem.} In particular, consider a pair of $\alg$ algebras $A$ and $B$ (throughout, $\alg$ will range over the seven algebras above). The extension problem concerns the classification of all algebras $L$ such that $0\xrightarrow{} A\xrightarrow{} L\xrightarrow{} B\xrightarrow{} 0$ is an extension of $A$ by $B$. Factor systems yield an explicit multiplication on $L$ and provide concrete tools for dealing with the relations among $L$, $A$, and $B$. These tools are commonly assumed in the well-known settings, although are rarely justified in detail. Nevertheless, the implications of factor systems are considerable, a fact that has been heavily demonstrated in the settings of associative and Lie algebras (and, of course, groups).

The majority of applications for factor systems have involved the special case of central extensions. The corresponding central factor systems, also known as 2-cocycles, are, for example, used in \cite{willem} to classify nilpotent associative algebras up to dimension 4. In \cite{million}, the authors develop a method for classifying nilpotent Lie algebras that relies on central factor systems. Central factor systems give rise to the second cohomology group $\cH^2(B,A)$ of an algebra $B$ with coefficients in a $B$-module $A$, yielding a characterization of central extensions by $\cH^2(B,A)$. This, in turn, gives rise to nice exact sequences (see \cite{karel} for example). The identities for factor systems of Lie algebras appear frequently in Lie theory. One such instance can be found in a section of Jacobson's \textit{Lie Algebras} \cite{jacobson} which concerns the theorems of Levi and Malcev-Harish-Chandra. On the other hand, a formulation of nonabelian factor systems for associative algebras is given in \cite{gouray}, where the group $\cH_{nab}^2(B,A)$ of nonabelian factor systems is shown to be in 1-1 correspondence with Maurer-Cartan elements of a certain differential graded Lie algebra. Such a feat can also be performed in the Leibniz setting (see \cite{liu}).

With these applications in mind, the intent of this paper is to serve as a starting point from which to advance extension theory in the settings of less-developed algebras. We continue to find results, some of which have been listed above, that rely on the assumptions of factor systems but have only been established for certain algebras. Furthermore, it is important to construct the more general nonabelian factor systems, which allows for work on noncentral extensions. Indeed, one of our current efforts involves extensions of nilpotent algebras, for which there are known results in the Lie setting that rely on factor systems. We plan to develop said results for all seven algebras in a future paper.

The final motivation for this paper is to appreciate the structure of factor systems themselves. In particular, consider the study of Leibniz algebras. We observe that much of Leibniz theory involves the generalization of Lie-theoretic results, which often carry over to the Leibniz setting with only slight adjustments. For example, there are entire proofs in Lie theory which hold for Leibniz algebras merely by replacing the bracket Lie algebra $[A,B]$ with the sum Leibniz algebra $AB+BA$. Factor systems, however, are a case in which generalizing gives rise to considerably more complicated structures. One powerful aspect of our functorial diagram is that each northeast movement along its arrows corresponds to a generalization of algebra type. This fact can be reasoned by seeing each algebra as a special case of its northeast-adjacent category. Besides the usual Lie to Leibniz comparison, any Zinbiel algebra can be seen as a dendriform algebra with multiplications satisfying $x<y = y>x$. Next, any commutative algebra is simply an associative algebra in which $xy=yx$. Finally, any associative algebra can be seen as a diassociative algebra in which $x\dashv y = x\vdash y$. Thus, any result that holds for the Leibniz, diassociative, and dendriform cases must necessarily hold for \textit{all seven algebras}. The definitions of all seven factor systems are provided in this paper for the sake of structural comparison.

To develop a theory of factor systems, we take our methodology from a chapter in W. R. Scott's \textit{Group Theory} \cite{scott} which develops and investigates the correspondence between factor systems and extensions of groups. This research began as an effort to develop a Leibniz analogue of said chapter in order to work on extensions of nilpotent algebras. We later discovered that the Leibniz case had already appeared in a 2018 paper \cite{liu} where a similar list of identities was obtained for nonabelian 2-cocycles. We then turned to the Zinbiel, diassociative, and dendriform cases, which we have not been able to find in written form. We have decided, however, to review the Leibniz case in explicit detail. The purpose is to provide a systematic approach to this theory as well as a self-contained paper with consistent notation. For the more complicated and obscure algebras, the results are thereby easier to follow.

This paper is structured as follows. For preliminaries, we define each type of algebra under consideration and provide a discussion of general extension theory for arbitrary algebras. The main work begins with the Leibniz analogue of the results in \cite{scott}. We next derive the diassociative analogue. These results can also be constructed for dendriform algebras via the same process as the diassociative case, replacing $\dashv$ with $<$ and $\vdash$ with $>$, although the identities which appear are uniquely determined by the different algebra structures. Thus do our results hold for all seven algebras via the above discussions. The definitions of the remaining factor systems are provided in the same sections as their generalizations. We conclude this paper with a brief discussion of the usual Leibniz cohomology and show how $\cH^2(B,A)$ characterizes extensions. The paper ends with a list of 2-cocycles and their defining identities. Said identities have appeared in \cite{rak}, but we collect them here for the sake of completeness.

\section{Preliminaries}
Let $\F$ be a field. All algebras will be $\F$-vector spaces equipped with bilinear multiplications which satisfy certain identities. We first recall that a \textit{Lie algebra} $L$ has multiplication which is \textit{alternating} and satisfies the \textit{Jacobi identity}. Respectively, this means that $xx = 0$ and $(xy)z + (yz)x + (zx)y = 0$ for all $x,y,z\in L$. A \textit{Leibniz algebra} $L$ is another type of nonassociative algebra whose multiplication satisfies the \textit{Leibniz identity} $x(yz) = (xy)z + y(xz)$ for all $x,y,z\in L$. It is well-known that any multiplication which is alternating is also skew-symmetric (meaning $xy=-yx$). Under skew-symmetry, the Leibniz identity can be rearranged to form the Jacobi identity, and thus Leibniz algebras are famously seen as the non-anticommutative generalization of Lie algebras.

\begin{defn}
A \textit{Zinbiel algebra} $Z$ is a nonassociative algebra with multiplication satisfying what we will call the \textit{Zinbiel identity} $(xy)z = x(yz) + x(zy)$ for all $x,y,z\in Z$.
\end{defn}

\begin{defn}
A \textit{diassociative algebra} $D$ is a vector space equipped with two associative bilinear products $\dashv$ and $\vdash$ which satisfy the following identities for all $x,y,z\in D$:
\begin{enumerate}
    \item[D1.] $x\dashv (y\dashv z) = x\dashv (y\vdash z)$,
    \item[D2.] $(x\vdash y)\dashv z = x\vdash (y\dashv z)$,
    \item[D3.] $(x\dashv y)\vdash z = (x\vdash y)\vdash z$.
\end{enumerate}
\end{defn}

\begin{defn}
A \textit{dendriform algebra} $E$ is a vector space equipped with two bilinear products $<$ and $>$ which satisfy the following identities for all $x,y,z\in E$:
\begin{enumerate}
    \item[E1.] $(x<y)<z = x<(y<z) + x<(y>z)$,
    \item[E2.] $(x>y)<z = x>(y<z)$,
    \item[E3.] $(x<y)>z + (x>y)>z = x>(y>z)$.
\end{enumerate}
\end{defn}

We now review extensions. Let $\alg$ denote a type of algebra (e.g. Lie, Leibniz, diassociative) and let $A$ and $B$ be arbitrary $\alg$ algebras. An \textit{extension of $A$ by $B$} is a short exact sequence of the form $0\xrightarrow{} A\xrightarrow{\sigma} \LL\xrightarrow{\pi} B\xrightarrow{} 0$ where $\LL$ is a $\alg$ algebra and $\sigma$ and $\pi$ are \textit{homomorphisms}, i.e. linear maps that preserve the $\alg$ structure. A \textit{section} of the extension is a linear map $T:B\xrightarrow{} L$ such that $\pi T = I_B$. Two extensions $0\xrightarrow{} A\xrightarrow{\sigma_1} \LL_1\xrightarrow{\pi_1} B\xrightarrow{} 0$ and $0\xrightarrow{} A\xrightarrow{\sigma_2} \LL_2\xrightarrow{\pi_2} B\xrightarrow{} 0$ of $A$ by $B$ are called \textit{equivalent} if there exists an isomorphism $\tau:\LL_1\xrightarrow{} \LL_2$ such that the diagram \[\begin{tikzcd}
0\arrow[r] &A\arrow["\sigma_1",r] \arrow["\text{id}",d] & \LL_1 \arrow[d,"\tau"] \arrow[r,"\pi_1"] & B\arrow[r] \arrow["\text{id}",d]&0\\
0\arrow[r] &A\arrow["\sigma_2",r,swap] & \LL_2 \arrow["\pi_2",r,swap] &B\arrow[r] &0
\end{tikzcd}\] commutes, i.e. if $\tau\sigma_1 = \sigma_2$ and $\pi_2\tau = \pi_1$. The isomorphism $\tau$ is called an \textit{equivalence}. An extension $0\xrightarrow{} A\xrightarrow{\sigma} \LL \xrightarrow{\pi} B\xrightarrow{}0$ of $A$ by $B$ is said to \textit{split} if there exists a homomorphism $T:B\xrightarrow{} \LL$ which is also a section. An extension $0\xrightarrow{} A\xrightarrow{\sigma} \LL \xrightarrow{\pi} B\xrightarrow{}0$ is called \textit{central} if $\sigma(A)$ is contained in the center $Z(L)$ of $L$, and \textit{abelian} if $\LL$ is abelian.

It is readily verified that equivalence of extensions is an equivalence relation. Moreover, if an extension splits, then so does every equivalent extension. Indeed, let $0\xrightarrow{} A\xrightarrow{\sigma_1} \LL_1\xrightarrow{\pi_1} B\xrightarrow{} 0$ be a split extension which is equivalent to another extension $0\xrightarrow{} A\xrightarrow{\sigma_2} \LL_2\xrightarrow{\pi_2} B\xrightarrow{} 0$ via an equivalence $\tau$. Then there is a homomorphism $T_1:B\xrightarrow{} \LL_1$ such that $\pi_1 T_1 = I_B$, which implies that $T_2=\tau T_1$ is a homomorphism from $B$ into $\LL_2$ satisfying $\pi_2T_2 = \pi_2\tau T_1 = \pi_1 T_1 = I_B$. Finally, since an equivalence is an isomorphism, it is straightforward to verify that extensions which are equivalent to abelian extensions are abelian, and extensions which are equivalent to central extensions are central.

\section{Factor Systems of Leibniz Algebras}
As with the group case, a factor system of algebras is a tuple of maps together with a set of identities that said maps satisfy. In particular, a Leibniz factor system involves three maps and seven defining identities while a Lie factor system involves only two maps and three identities. The Lie identities are, of course, well-known.
The definitions of factor systems for both Lie and Leibniz algebras are stated here for the sake of comparison. Recall that $\ad^l$ and $\ad^r$ denote the left and right multiplication operators respectively; $\ad^l$ is simply called $\ad$ in the Lie case since $\ad^r = -\ad$.

\begin{defn} Let $A$ and $B$ be Leibniz algebras. A \textit{factor system} of $A$ by $B$ is a tuple of maps $(\vp,\vp',f)$ where \begin{itemize}
    \item[] $\vp:B\xrightarrow{} \Der(A)$ is linear,
    \item[] $\vp':B\xrightarrow{} \mathscr{L}(A)$ is linear,
    \item[] $f:B\times B\xrightarrow{} A$ is bilinear
\end{itemize} such that \begin{enumerate}
    \item $m(\vp(i)n) = (\vp'(i)m)n + \vp(i)(mn)$
    \item $m(\vp'(i)n) = \vp'(i)(mn) + n(\vp'(i)m)$
    \item $\ad^r_{f(i,j)} + \vp'(ij) = \vp'(j)\vp'(i) + \vp(i)\vp'(j)$
    \item $\vp(i)(mn) = (\vp(i)m)n + m(\vp(i)n)$
    \item $\vp(i)\vp(j) = \vp(ij) + \vp(j)\vp(i) + \ad^l_{f(i,j)}$
    \item $\vp(i)\vp'(j) = \vp'(j)\vp(i) + \vp'(ij) + \ad^r_{f(i,j)}$
    \item $f(i,jk) + \vp(i)f(j,k) = f(ij,k) + \vp'(k)f(i,j) + f(j,ik) + \vp(j)f(i,k)$
\end{enumerate} are satisfied for all $m,n\in A$ and $i,j,k\in B$. Note that the fourth identity allows for $\vp:B\xrightarrow{} \Der(A)$.
\end{defn}

\begin{defn} Let $\fa$ and $\fb$ be Lie algebras with multiplications denoted by bracket and consider the natural Lie algebra structure on $\Der(\fa)$ under the commutator bracket. A \textit{factor system} of $\fa$ by $\fb$ is a pair $(\vp,f)$ of functions \begin{enumerate}
    \item[] $\vp:\fb\xrightarrow{} \Der(\fa)$ linear,
    \item[] $f:\fb\times \fb\xrightarrow{} \fa$ bilinear \end{enumerate} such that
\begin{enumerate}
    \item $f(i,j) = -f(j,i)$
    \item $\vp[i,j] = [\vp(i),\vp(j)]- \ad_{f(i,j)}$
    \item $\vp(k)f(i,j) + \vp(i)f(j,k) + \vp(j)f(k,i) = f([i,j],k) + f([j,k],i) + f([k,i],j)$
\end{enumerate} for all $i,j,k\in \fb$.
\end{defn}

\subsection{Belonging}
Our first goal is to construct a correspondence between factor systems and extensions. Consider an extension $0\xrightarrow{} A\xrightarrow{\sigma} \LL\xrightarrow{\pi} B\rightarrow{} 0$ of $A$ by $B$ and a section $T:B\xrightarrow{} \LL$. Consider also the linear maps $\rho:\LL\xrightarrow{} \Der(\sigma(A))$ and $\rho':\LL\xrightarrow{} \mathscr{L}(\sigma(A))$ defined by $\rho(x) = \ad^l_x|_{\sigma(A)}$ and $\rho'(x) = \ad^r_x|_{\sigma(A)}$ respectively for $x\in L$. Put simply, these maps denote the left and right multiplication operators that act on the image of $\sigma$ in $\LL$. We next use $\rho$ and $\rho'$ to define the maps $P:\LL\xrightarrow{} \Der(A)$ and $P':\LL\xrightarrow{} \mathscr{L}(A)$ by $P(x) = \sigma\inv \rho(x)\sigma$ and $P'(x) = \sigma\inv \rho'(x)\sigma$ respectively, formalizing a way for $\LL$ to act on $A$. To work explicitly with these maps, one computes $P(x)m = \sigma\inv \rho(x)\sigma(m) = \sigma\inv(x\sigma(m))$ and $P'(x)m = \sigma\inv \rho'(x)\sigma(m) = \sigma\inv(\sigma(m)x)$ for any $m\in A$ and $x\in \LL$. The maps $\vp$ and $\vp'$ of a factor system are ways for $B$ to act on $A$. It is thus natural to compose $P$ and $P'$ with $T$, as well as to define $f$ in terms of $T$, which leads to Definition \ref{belonging}. What follows are two converse results that form the framework for our correspondence.

\begin{defn}\label{belonging}
A factor system $(\vp,\vp',f)$ of $A$ by $B$ \textit{belongs} to the extension $0\xrightarrow{} A\xrightarrow{\sigma} \LL \xrightarrow{\pi} B\rightarrow{} 0$ and $T$ if $\vp = PT$, $\vp'=P'T$, and $\sigma(f(i,j)) = T(i)T(j) - T(ij)$ for all $i,j\in B$.
\end{defn}

\begin{thm}\label{thm 1}
Given an extension $0\xrightarrow{} A\xrightarrow{\sigma} \LL\xrightarrow{\pi} B\rightarrow{} 0$ of $A$ by $B$ and section $T:B\xrightarrow{} \LL$, there exists a unique factor system $(\vp,\vp',f)$ of $A$ by $B$ belonging to the extension and $T$.
\end{thm}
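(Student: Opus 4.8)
The plan is to take the three maps prescribed by Definition \ref{belonging} as the \emph{definition} of the candidate factor system and then verify, in turn, that each map is well-defined with the correct codomain, that the seven identities hold, and that no other choice is possible. Concretely, set $\vp = PT$ and $\vp' = P'T$, and define $f:B\times B\to A$ by $f(i,j) = \sigma\inv\big(T(i)T(j) - T(ij)\big)$. Uniqueness is then immediate and may be dispatched first: any factor system belonging to the extension and $T$ must satisfy precisely these three equations, and since $\sigma$ is injective the element $f(i,j)$ is forced by $\sigma(f(i,j)) = T(i)T(j)-T(ij)$. Thus the entire content of the theorem is existence.

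For well-definedness, first observe that $\sigma(A) = \ker\pi$ is an ideal of $L$, so left and right multiplication by any $x\in L$ preserve $\sigma(A)$; hence $\rho,\rho'$, and therefore $P,P',\vp,\vp'$, all make sense. That $f$ takes values in $A$ requires $T(i)T(j)-T(ij)\in\sigma(A)$, which holds because $\pi$ is a homomorphism with $\pi T = I_B$, so that $\pi\big(T(i)T(j)-T(ij)\big) = ij-ij = 0$ and the argument lies in $\ker\pi = \sigma(A)$, making $\sigma\inv$ applicable. Bilinearity of $f$ follows from bilinearity of the product on $L$ together with the linearity of $T$ and $\sigma\inv$. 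The codomains of $\vp$ and $\vp'$ are pinned down by the structural identities: identity 4 (verified below) exhibits $\vp(i)$ as a derivation of $A$, while $\vp'(i)\in\mathscr{L}(A)$ holds because a right multiplication operator of $L$ satisfies the relation defining $\mathscr{L}(A)$, which is exactly identity 2.

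The heart of the proof is a single mechanical device that yields all seven identities. One works inside $L$ using the dictionary $T(i)\sigma(m) = \sigma(\vp(i)m)$, $\sigma(m)T(i) = \sigma(\vp'(i)m)$, $\sigma(m)\sigma(n) = \sigma(mn)$, and $T(i)T(j) = T(ij) + \sigma(f(i,j))$, each of which is a restatement of a definition or of the fact that $\sigma$ is a homomorphism. To prove a given identity one applies $\sigma$ to both sides (so that every term becomes an element of $L$), expands the relevant triple product via the Leibniz identity $x(yz) = (xy)z + y(xz)$ in $L$, rewrites each piece through the dictionary, and finally cancels $\sigma$ by injectivity. For instance, expanding $T(i)\big(T(j)\sigma(m)\big)$ yields identity 5, whose $\ad^l_{f(i,j)}$ term comes from the $\big(T(i)T(j)\big)\sigma(m)$ summand; expanding $\sigma(m)\big(T(i)T(j)\big)$ gives identity 3; expanding $T(i)\big(\sigma(m)T(j)\big)$ gives identity 6; and expanding the mixed products $\sigma(m)\big(T(i)\sigma(n)\big)$, $\sigma(m)\big(\sigma(n)T(i)\big)$, and $T(i)\big(\sigma(m)\sigma(n)\big)$ produces identities 1, 2, and 4 respectively.

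The one step deserving genuine care, and the main obstacle, is the cocycle identity 7, which carries the most terms. It arises from the Leibniz identity applied to $T(i)\big(T(j)T(k)\big)$: one substitutes $T(j)T(k) = T(jk)+\sigma(f(j,k))$, $T(i)T(j) = T(ij)+\sigma(f(i,j))$, and $T(i)T(k) = T(ik)+\sigma(f(i,k))$ throughout, applies the dictionary to each resulting term, and then notes that the purely $T$-valued terms $T(i(jk))$, $T((ij)k)$, and $T(j(ik))$ cancel precisely because the Leibniz identity $i(jk) = (ij)k + j(ik)$ holds in $B$ and $T$ is linear. What survives, after cancelling $\sigma$, is exactly identity 7. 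With all seven identities established, $(\vp,\vp',f)$ is a factor system belonging to the extension and $T$, which completes the existence half and hence the proof.
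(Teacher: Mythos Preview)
Your proposal is correct and follows essentially the same route as the paper: define $\vp=PT$, $\vp'=P'T$, and $f(i,j)=\sigma^{-1}(T(i)T(j)-T(ij))$, then verify the seven identities by applying the Leibniz identity in $L$ to suitable triple products and pulling back through $\sigma^{-1}$. Your ``dictionary'' is a clean way to phrase exactly the computations the paper writes out line by line; the only cosmetic difference is that you isolate uniqueness first and narrate the verification of identity~7 via cancellation of the $T$-terms, whereas the paper absorbs those terms directly into the $f$-expressions on the right-hand side.
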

\begin{proof}
Let $\vp = PT$ and $\vp' = P'T$. To define $f$, one notes that $T(i)T(j) - T(ij)\in \ker \pi$ for any $i,j\in B$. By exactness, there exists an element $c_{i,j}\in A$ such that $\sigma(c_{i,j}) = T(i)T(j) - T(ij)$. Let $f$ be defined by $f(i,j) = c_{i,j}$. One may verify that $f:B\times B\xrightarrow{} A$ is bilinear by applying $\sigma$ to perform the computation and then applying $\sigma\inv$. It remains to verify that $(\vp,\vp',f)$ is a factor system. We compute:
\begin{enumerate}
    \item $\begin{aligned}[t] m(\vp(i)n) &= \sigma\inv(\sigma(m)(T(i)\sigma(n)))\\ &= \sigma\inv((\sigma(m)T(i))\sigma(n) + T(i)(\sigma(m)\sigma(n))) \\ &= (\vp'(i)m)n + \vp(i)(mn), \end{aligned}$
    \item $\begin{aligned}[t] m(\vp'(i)n) &= \sigma\inv(\sigma(m)(\sigma(n)T(i))\\ &= \sigma\inv((\sigma(m)\sigma(n))T(i) + \sigma(n)(\sigma(m)T(i)) \\ &= \vp'(i)(mn) + n(\vp'(i)m), \end{aligned}$
    \item $\begin{aligned}[t]
        mf(i,j) + \vp'(ij)m &= \sigma\inv(\sigma(m)(T(i)T(j)) - \sigma(m)T(ij) +\sigma(m)T(ij)) \\ &= \sigma\inv((\sigma(m)T(i))T(j) + T(i)(\sigma(m)T(j))) \\ &= \sigma\inv\rho'(T(j))\sigma(\sigma\inv\rho'(T(i))\sigma(m)) + \sigma\inv\rho(T(i))\sigma(\sigma\inv\rho'(T(j))\sigma(m)) \\ &= \vp'(j)(\vp'(i)m) + \vp(i)(\vp'(j)m),
    \end{aligned}$
    \item $\begin{aligned}[t] \vp(i)(mn) &= \sigma\inv(T(i) (\sigma(m)\sigma(n))) \\ &= \sigma\inv((T(i)\sigma(m))\sigma(n) +\sigma(m)(T(i)\sigma(n))) \\ &= (\vp(i)m)n + m(\vp(i)n), \end{aligned}$
    \item $\begin{aligned}[t]
        \vp(i)(\vp(j)m) & = \sigma\inv(T(i)(T(j)\sigma(m)))\\ &= \sigma\inv((T(i)T(j))\sigma(m) -T(ij)\sigma(m) + T(j)(T(i)\sigma(m)) + T(ij)\sigma(m)) \\ &= \sigma\inv(\sigma(f(i,j))\sigma(m)) + \sigma\inv\rho(T(j))\sigma(\sigma\inv\rho(T(i))\sigma(m)) + \sigma\inv\rho(T(ij))\sigma(m) \\ &= f(i,j)m + \vp(j)(\vp(i)m) + \vp(ij)m,
    \end{aligned}$
    \item $\begin{aligned}[t] \vp(i)(\vp'(j)m) &= \sigma\inv(T(i)(\sigma(m)T(j))) \\ &= \sigma\inv((T(i)\sigma(m))T(j) + \sigma(m)(T(i)T(j)) + \sigma(m)T(ij) - \sigma(m)T(ij)) \\ &= \sigma\inv\rho'(T(j)\sigma(\sigma\inv\rho(T(i))\sigma(m)) + \sigma\inv\rho'(T(ij))\sigma(m) + \sigma\inv(\sigma(m)\sigma(f(i,j))) \\ &= \vp'(j)(\vp(i)m) + \vp'(ij)m + mf(i,j), \end{aligned}$
    \item $\begin{aligned}[t] f(i,jk) + \vp(i)f(j,k) &= \sigma\inv(T(i)T(jk) - T(i(jk)) + T(i)(T(j)T(k)) - T(i)T(jk)) \\ &= \sigma\inv(T(ij)T(k) - T((ij)k) + (T(i)T(j))T(k) - T(ij)T(k) \\ &~~~~~~~~~~~~ + T(j)T(ik) - T(j(ik)) + T(j)(T(i)T(k)) - T(j)T(ik)) \\ &= \sigma\inv(\sigma(f(ij,k)) + \sigma(f(i,j))T(k) + \sigma(f(j,ik)) + T(j)\sigma(f(i,k))) \\ &= f(ij,k) + \vp'(k)f(i,j) + f(j,ik) + \vp(j)f(i,k). \end{aligned}$
\end{enumerate}
\end{proof}

\begin{thm}\label{thm 2}
(Converse to Theorem \ref{thm 1}) Let $(\vp,\vp',f)$ be a factor system of $A$ by $B$ and let $\LL$ denote the vector space $A\oplus B$ with multiplication $(m,i)(n,j) = (mn + \vp(i)n + \vp'(j)m + f(i,j)~,~ ij)$ for $m,n\in A$ and $i,j\in B$. Let $\sigma:A\xrightarrow{} \LL$ by $\sigma(m) = (m,0)$, $\pi:\LL\xrightarrow{} B$ by $\pi(m,i) = i$, and $T:B\xrightarrow{} \LL$ by $T(i) = (0,i)$. Then \begin{enumerate}
    \item $\LL$ is a Leibniz algebra.
    \item $0\xrightarrow{} A\xrightarrow{\sigma} \LL \xrightarrow{\pi} B \xrightarrow{} 0$ is an extension.
    \item $\pi T = I = I_B$.
    \item The factor system $(\vp,\vp',f)$ belongs to the extension and $T$.
\end{enumerate}
\end{thm}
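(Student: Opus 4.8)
The plan is to dispatch the four claims in the order given, observing that only the first demands genuine work. For claim (2), I would read everything directly off the multiplication formula. Since $\vp,\vp'$ are linear and $f$ is bilinear, we have $\vp(0)=0$, $\vp'(0)=0$, and $f(0,0)=0$, so $\sigma(m)\sigma(n)=(mn,0)=\sigma(mn)$, giving that $\sigma$ is an injective homomorphism; and $\pi$ is a surjective homomorphism because the second coordinate of any product in $\LL$ is just the corresponding product in $B$. Exactness is immediate from $\ker\pi=\{(m,0):m\in A\}=\ima\sigma$. Claim (3) is the one-line computation $\pi T(i)=\pi(0,i)=i$. For claim (4), I would compute $PT(i)m=\sigma\inv(T(i)\sigma(m))=\sigma\inv(\vp(i)m,0)=\vp(i)m$ and $P'T(i)m=\sigma\inv(\sigma(m)T(i))=\vp'(i)m$, again using $\vp'(0)=0$ and $f(i,0)=f(0,i)=0$, and finally $T(i)T(j)-T(ij)=(f(i,j),ij)-(0,ij)=\sigma(f(i,j))$, which is exactly the belonging condition of Definition \ref{belonging}.

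The heart of the theorem is claim (1), the Leibniz identity $x(yz)=(xy)z+y(xz)$ for $\LL$. Writing $x=(m,i)$, $y=(n,j)$, $z=(p,k)$, the second coordinate of the three products is $i(jk)$, $(ij)k$, and $j(ik)$, so the second coordinate of the identity for $\LL$ is precisely the Leibniz identity in $B$ and needs nothing further. Everything therefore reduces to the first coordinate. I would expand all three products fully and collect the resulting terms. The governing principle is that each of the seven factor system identities is exactly what is needed to reconcile one group of like terms, with the purely-$A$ contribution $m(np)$ versus $(mn)p+n(mp)$ handled by the Leibniz identity of $A$ itself.

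Grouped by shape, the single-operator terms $m(\vp(j)p)$, $m(\vp'(k)n)$, and $\vp(i)(np)$ are resolved by identities 1, 2, and 4 respectively; the terms carrying a composite operator or a multiplication by a value of $f$, namely $mf(j,k)+\vp'(jk)m$, $\vp(i)(\vp(j)p)$, and $\vp(i)(\vp'(k)n)$, are resolved by identities 3, 5, and 6; and the remaining purely-$f$ terms $\vp(i)f(j,k)+f(i,jk)$ are resolved by identity 7. Since every term on the left is matched against a sum of terms on the right and conversely, the first coordinates agree and $\LL$ is Leibniz. I expect the main obstacle to be purely organizational: keeping the bookkeeping straight across the expansion so that no term is dropped or double-counted, and confirming that the seven identities partition the terms with none left over. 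This is the exact reverse of the computation in Theorem \ref{thm 1}, where each identity was read off from a given extension; here each identity is instead consumed to build the extension, so no difficulty beyond careful accounting should arise.
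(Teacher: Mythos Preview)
Your proposal is correct and matches the paper's proof essentially line for line: the paper likewise dispatches parts 2--4 by the same direct computations (including the observation $f(i,0)=0$), and for part 1 expands $(m,i)\big((n,j)(p,k)\big)$ and tags each term with the number of the factor system axiom (or ``Leib.'') that rewrites it, exactly as in your grouping. The only difference is cosmetic---the paper displays the full expansion explicitly---so there is nothing substantive to add.
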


\begin{proof}
For part 1, the multiplication defined on $\LL$ is clearly linear, and so it suffices to show that the Leibniz identity holds. One computes\footnote{It was from this computation that the axioms of factor systems were chosen.}
\begin{align*}
    (m,i)\big((n,j)(p,k)\big) &= \Big(\underset{\text{Leib.}}{m(np)} + \underset{1.}{m(\vp(j)p)} + \underset{2.}{m(\vp'(k)n)} + \underset{3.}{mf(j,k)} + \underset{\text{4.}}{\vp(i)(np)} + \underset{5.}{\vp(i)(\vp(j)p)}\\ & ~~~~~~~~~~~~~~~~ + \underset{6.}{\vp(i)(\vp'(k)n)} +\underset{7.}{\vp(i)f(j,k)} + \underset{3.}{\vp'(jk)m} + \underset{7.}{f(i,jk)}~,~ \underset{\text{Leib.}}{i(jk)}\Big) \\ &= \Big(\underset{\text{Leib.}}{(mn)p + n(mp)} + \underset{1.}{\underbrace{(\vp'(j)m)p + \vp(j)(mp)}} + \underset{2.}{\underbrace{\vp'(k)(mn) + n(\vp'(k)m)}} \\ &~~~~~~~~~~~~ +\underset{3.}{\underbrace{\vp'(k)(\vp'(j)m) + \vp(j)(\vp'(k)m)}} + \underset{\text{4.}}{\underbrace{(\vp(i)n)p + n(\vp(i)p)}} \\ &~~~~~~~~~~~~ + \underset{5.}{\underbrace{f(i,j)p + \vp(ij)p + \vp(j)(\vp(i)p)}} + \underset{6.}{\underbrace{\vp'(k)(\vp(i)n) + nf(i,k) + \vp'(ik)n}} \\ &~~~~~~~~~~~~ + \underset{7.}{\underbrace{f(ij,k) + \vp'(k)f(i,j) + f(j,ik) + \vp(j)f(i,k)}} ~,~ \underset{\text{Leib.}}{(ij)k + j(ik)} \Big)
\end{align*} where ``Leib." marks the equalities that follow from the Leibniz identities on $A$ and $B$, and numbers $1,\dots,7$ mark the equalities that follow from the axioms of the given factor system. Expanding the sum $((m,i)(n,j))(p,k) + (n,j)((m,i)(p,k))$ via multiplication on $\LL$ yields the same outcome. Hence $\LL$ is a Leibniz algebra.

For part 2, we first compute $\sigma(mn) = (mn,0) = (m,0)(n,0) = \sigma(m)\sigma(n)$ and \begin{align*}
    \pi((m,i)(n,j)) &= \pi(mn + \vp(i)n + \vp'(j)m + f(i,j)~,~ ij) \\ &= ij \\&= \pi(m,i)\pi(n,j)
\end{align*} which implies that $\sigma$ and $\pi$ are homomorphisms. Moreover, the exactness of $0\xrightarrow{} A\xrightarrow{\sigma} \LL\xrightarrow{\pi} B\xrightarrow{} 0$ is trivial. Part 3 is also immediate. For part 4, let $m\in A$ and $i,j\in B$. Then \begin{align*}
    PT(i)m &= \sigma\inv((0,i)(m,0)) \\ &= \sigma\inv(\vp(i)m + f(i,0)~,~ 0) \\ &= \vp(i)m
\end{align*} implies that $PT = \vp$. The equality $P'T = \vp'$ holds by similar computation. Finally, $\sigma(f(i,j)) = (f(i,j),0) = (0,i)(0,j) - (0,ij) = T(i)T(j) - T(ij)$. Hence our factor system belongs to the extension and $T$.
\end{proof}

\subsection{Equivalence}
We now define a relation between factor systems under which a change in $T$ (to an equivalent extension) results in a change in the corresponding factor system to an equivalent factor system. Such a notion establishes equivalence classes of factor systems that correspond to equivalence classes of extensions. Thus Theorem \ref{thm 3} strengthens the correspondence of the first two theorems.

\begin{defn}
Factor systems $(\vp,\vp',f)$ and $(\psi,\psi',g)$ of $A$ by $B$ are called \textit{equivalent} if there exists a linear transformation $E:B\xrightarrow{} A$ such that \begin{enumerate}
    \item $\psi(i) = \vp(i) + \ad^l_{E(i)}$,
    \item $\psi'(i) = \vp'(i) + \ad^r_{E(i)}$,
    \item $g(i,j) = f(i,j) + \vp'(j)E(i) + \vp(i)E(j) + E(i)E(j) - E(ij)$
\end{enumerate} for all $i,j\in B$. The function $E$ is called an \textit{equivalence}.
\end{defn}

\begin{thm} \label{thm 3}
If the factor system $(\vp_1,\vp_1',f_1)$ belongs to the extension $0\xrightarrow{} A\xrightarrow{\sigma_1} \LL_1\xrightarrow{\pi_1} B\rightarrow{} 0$ and $T_1$ and the factor system $(\vp_2,\vp_2',f_2)$ belongs to the extension $0\xrightarrow{} A\xrightarrow{\sigma_2} \LL_2\xrightarrow{\pi_2} B\rightarrow{} 0$ and $T_2$, then the factor systems are equivalent if and only if the extensions are equivalent.
\end{thm}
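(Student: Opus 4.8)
The plan is to prove both implications by exploiting the vector-space decomposition $L_k = \sigma_k(A)\oplus T_k(B)$ for $k=1,2$ furnished by the sections: since $\ker\pi_k = \sigma_k(A)$ and $\pi_k T_k = I_B$, every element of $L_k$ is uniquely $\sigma_k(m) + T_k(i)$ with $m\in A$ and $i\in B$. Using the belonging relations $\vp_k = P_kT_k$, $\vp_k' = P_k'T_k$, $\sigma_k(f_k(i,j)) = T_k(i)T_k(j) - T_k(ij)$ together with $P_k(x)m = \sigma_k\inv(x\sigma_k(m))$ and $P_k'(x)m = \sigma_k\inv(\sigma_k(m)x)$, the multiplication of $L_k$ is completely determined: $(\sigma_k(m)+T_k(i))(\sigma_k(n)+T_k(j)) = \sigma_k(mn + \vp_k(i)n + \vp_k'(j)m + f_k(i,j)) + T_k(ij)$. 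The observation linking the two notions of equivalence is that altering the section of a fixed extension perturbs the attached factor system by exactly an equivalence $E:B\xrightarrow{}A$, and conversely.

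For the forward direction, suppose $\tau:L_1\xrightarrow{}L_2$ is an equivalence of extensions. Then $\tau T_1$ is a section of $L_2$ since $\pi_2\tau T_1 = \pi_1 T_1 = I_B$, so $\tau T_1(i) - T_2(i)\in\ker\pi_2 = \sigma_2(A)$ and I may define $E$ by $\sigma_2(E(i)) = \tau T_1(i) - T_2(i)$, which is clearly linear. First I would check that the unique factor system (Theorem \ref{thm 1}) belonging to $(L_2,\tau T_1)$ is precisely $(\vp_1,\vp_1',f_1)$: because $\tau$ is an algebra homomorphism with $\tau\sigma_1=\sigma_2$, one computes $\sigma_2\inv(\tau T_1(i)\cdot\sigma_2(m)) = \sigma_1\inv(T_1(i)\cdot\sigma_1(m)) = \vp_1(i)m$, and likewise for $\vp_1'$ and $f_1$. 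Then, comparing this factor system with $(\vp_2,\vp_2',f_2)$ on the single extension $L_2$, I substitute $\tau T_1(i) = T_2(i) + \sigma_2(E(i))$ and use that $\sigma_2$ is a homomorphism; this yields $\vp_1 = \vp_2 + \ad^l_E$, $\vp_1' = \vp_2' + \ad^r_E$, and $f_1(i,j) = f_2(i,j) + \vp_2'(j)E(i) + \vp_2(i)E(j) + E(i)E(j) - E(ij)$, which are exactly the three identities making the factor systems equivalent.

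For the converse, suppose the factor systems are equivalent via $E$, so that $\vp_2 = \vp_1 + \ad^l_E$, $\vp_2' = \vp_1' + \ad^r_E$, and $f_2(i,j) = f_1(i,j) + \vp_1'(j)E(i) + \vp_1(i)E(j) + E(i)E(j) - E(ij)$. I would define $\tau:L_1\xrightarrow{}L_2$ on the decomposition by $\tau(\sigma_1(m)+T_1(i)) = \sigma_2(m - E(i)) + T_2(i)$, which is well-defined and linear. The equalities $\tau\sigma_1 = \sigma_2$ and $\pi_2\tau = \pi_1$ are immediate, and injectivity and surjectivity follow by first comparing $B$-components via $\pi_2$ and then $A$-components via injectivity of $\sigma_2$, so $\tau$ is a linear isomorphism commuting with the structure maps. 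It remains to verify that $\tau$ is a homomorphism: applying $\tau$ to the product formula for $L_1$ and comparing with the product formula for $L_2$ evaluated on $\tau(\sigma_1(m)+T_1(i))$ and $\tau(\sigma_1(n)+T_1(j))$, the three equivalence identities force every stray term ($mE(j)$, $E(i)n$, $E(i)E(j)$, $\vp_1(i)E(j)$, $\vp_1'(j)E(i)$) to cancel, leaving $\tau(xy)=\tau(x)\tau(y)$.

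The main obstacle, and the only genuinely delicate point, is this homomorphism verification (equivalently, the $f$-identity in the forward direction): it requires invoking all three equivalence conditions at once and tracking the $A$-valued terms exactly. In particular, the correction in $\tau$ must be $-E$ rather than $+E$, since the $m$- and $n$-linear terms cancel only because $\vp_2'-\vp_1' = \ad^r_E$ and $\vp_2-\vp_1 = \ad^l_E$ dictate this sign, after which condition 3 supplies the residual $-E(ij)$ in the $A$-component. Both implications are in the end the same bilinear computation read in opposite directions, so once the forward comparison is set up carefully the converse is its mirror image.
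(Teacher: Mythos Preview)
Your proposal is correct and follows essentially the same route as the paper: both arguments use the decomposition $L_k=\sigma_k(A)\oplus T_k(B)$, define $\tau(\sigma_1(m)+T_1(i))=\sigma_2(m-E(i))+T_2(i)$ for the factor-system-to-extension direction, and extract $E$ from the difference of sections for the other direction. The only cosmetic differences are that you swap the order of the two implications and, in the extension-to-factor-system direction, you organize the computation via the clean intermediate observation that $(\vp_1,\vp_1',f_1)$ is the factor system of $(L_2,\tau T_1)$ before comparing sections (your $E$ then has the opposite sign from the paper's, yielding the equivalence with the roles of the two factor systems interchanged, which is harmless).
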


\begin{proof}
($\implies$) Assume the factor systems are equivalent and let $E$ be the corresponding equivalence. Recall that an equivalence of extensions requires an isomorphism  $\tau:\LL_1\xrightarrow{}\LL_2$ such that $\tau\sigma_1 = \sigma_2$ and $\pi_2\tau = \pi_1$. We know that any element in $\LL_1$ has a unique representation of the form $T_1(i)+\sigma_1(m)$ for $i\in B$ and $m\in A$. Define $\tau(T_1(i)+\sigma_1(m)) = T_2(i) + \sigma_2(-E(i)+m)$. Clearly $\tau$ is linear. To show that $\tau$ preserves multiplication, consider elements $a,b\in \LL_1$ with unique representations $a=T_1(i)+\sigma_1(m)$ and $b=T_1(j)+\sigma_1(n)$. We first compute
\begin{align*}
    \tau(ab) &= \tau\big(T_1(i)T_1(j)+ \sigma_1(m)T_1(j) + T_1(i)\sigma_1(n) + \sigma_1(m)\sigma_1(n)\big)\\
    &= \tau\Big(T_1(ij) + \sigma_1\big(f_1(i,j) + \vp_1'(j)m + \vp_1(i)n + mn\big)\Big) & \text{belonging}\\ &= T_2(ij) + \sigma_2\big(-E(ij) + f_1(i,j) + \vp_1'(j)m + \vp_1(i)n + mn\big).
\end{align*}
On the other hand, one computes \begin{align*}
    \tau(a)\tau(b)&= T_2(i)T_2(j) + T_2(i)\sigma_2(-E(j)) + T_2(i)\sigma_2(n) + \sigma_2(-E(i))T_2(j) \\ & ~~~~+ \sigma_2(m)T_2(j) +\sigma_2(-E(i))\sigma_2(-E(j)) + \sigma_2(m)\sigma_2(-E(j)) \\ & ~~~~ + \sigma_2(-E(i))\sigma_2(n) +\sigma_2(m)\sigma_2(n)\\
    &= T_2(ij) + \sigma_2\big(f_2(i,j) - \vp_2(i)E(j) + \vp_2(i)n - \vp_2'(j)E(i) & \text{belonging}\\ &~~~~~~~~~~~~~~~~~~~~~~~ + \vp_2'(j)m + E(i)E(j) - mE(j) - E(i)n+mn \big) \\ &= T_2(ij) + \sigma_2(p)
\end{align*}
where $p\in A$ is the expression in the argument of $\sigma_2$. Since $T_2(ij)$ is the only $T_2$ term on both sides, it remains to check the $\sigma_2$ parts. Compute \begin{align*}
    p &= f_1(i,j) + \vp_1'(j)E(i) + \vp_1(i)E(j) + E(i)E(j) - E(ij) &\text{equivalence axiom 3} \\
&~~~~ -\vp_1(i)E(j) - E(i)E(j)+ \vp_1(i)n+E(i)n & \text{equivalence axiom 1}\\ &~~~~ - \vp_1'(j)E(i) - E(i)E(j) + \vp_1'(j)m + mE(j) &\text{equivalence axiom 2}\\
    &~~~~ +E(i)E(j) - mE(j) - E(i)n+mn\\
    & = f_1(i,j) - E(ij) + \vp_1(i)n +\vp_1'(j)m + mn.
\end{align*}
Thus $\tau$ preserves multiplication. The computation \begin{align*}
    \pi(T_1(i)+\sigma_1(m)) &= i \\ &= \pi_2(T_2(i) + \sigma_2(-E(i)+m))\\ &= \pi_2\tau(T_1(i)+\sigma_1(m))
\end{align*} implies that $\pi_1 = \pi_2\tau$. Finally, $\tau\sigma_1(m) = \sigma_2(m)$ for all $m\in A$ by the definition of $\tau$. Hence $\tau\sigma_1 = \sigma_2$ and the extensions are equivalent.

$(\impliedby)$ Conversely, assume that the extensions are equivalent. Then there exists an isomorphism $\tau:\LL_1\xrightarrow{} \LL_2$ such that $\tau\sigma_1 = \sigma_2$ and $\pi_2\tau = \pi_1$. The equality $\pi_1\tau\inv T_2(i) = \pi_2T_2(i) = \pi_1 T_1(i)$ holds for any $i\in B$, yielding an element $\tau\inv T_2(i)-T_1(i)\in \ker \pi_1$. By exactness, $\ker \pi_1 = \ima\sigma_1$, and so there exists an element $n_i\in A$ such that $\tau\inv T_2(i) = T_1(i) + \sigma_1(n_i)$. Define $E:B\xrightarrow{} A$ by $E(i) = n_i$. It remains to verify that $E$ is an equivalence. One computes:

\begin{enumerate}
    \item $\begin{aligned}[t] \vp_2(i)n &= P_2T_2(i)n - n_in + n_in\\ &= \sigma_2\inv(T_2(i)\sigma_2(n) - \sigma_2(n_i) \sigma_2(n)) + E(i)n \\ &= \sigma_1\inv\tau\inv((\tau T_1(i) + \tau \sigma_1(n_i))\tau\sigma_1(n) - \tau\sigma_1(n_in)) + E(i)n \\ &= \sigma_1\inv(T_1(i)\sigma_1(n) + \sigma_1(n_in) - \sigma_1(n_in)) + E(i)n \\ &= \vp_1(i)n + E(i)n, \end{aligned}$
    \item $\begin{aligned}[t] \vp_2'(j)m &= P_2'T_2(j)m - mn_j + mn_j \\ &= \sigma_2\inv(\sigma_2(m)T_2(j) - \sigma_2(m)\sigma_2(n_j)) + mE(j) \\ &= \sigma_1\inv\tau\inv(\tau\sigma_1(m)(\tau T_1(j) + \tau\sigma_1(n_j)) - \tau\sigma_1(mn_j)) + mE(j)\\ &= \sigma_1\inv(\sigma_1(m)T_1(j) - \sigma_1(m)\sigma_1(n_j) - \sigma_1(mn_j))) + mE(j)\\ &= \vp_1'(j)m + mE(j), \end{aligned}$
    \item $\begin{aligned}[t] f_2(i,j) &= \sigma_2\inv(T_2(i)T_2(j) - T_2(ij)) \\ &= \sigma_1\inv \tau\inv((\tau T_1(i) + \tau\sigma_1(n_i))(\tau T_1(j) + \tau\sigma_1(n_j)) - \tau T_1(ij) - \tau\sigma_1(n_{ij})) \\ &= \sigma_1\inv(T_1(i)T_1(j) + T_1(i)\sigma_1(n_j) + \sigma_1(n_i)T_1(j) + \sigma_1(n_in_j) - T_1(ij) - \sigma_1(n_{ij})) \\ &= \sigma_1\inv(\sigma_1(f_1(i,j)) + \rho(T_1(i))\sigma_1(n_j) + \rho'(T_1(j))\sigma_1(n_i)) + n_in_j - n_{ij} \\ &= f_1(i,j) + \vp_1'(j)E(i) + \vp_1(i)E(j) + E(i)E(j) - E(ij). \end{aligned}$
\end{enumerate}
Thus the factor systems are equivalent.
\end{proof}

Two results follow easily from Theorem \ref{thm 3}. The proofs are stated as one because they are so short.

\begin{cor}\label{diff Ts}
    Given an extension $0\xrightarrow{} A\xrightarrow{\sigma} \LL\xrightarrow{\pi} B\xrightarrow{} 0$, let $T_1:B\xrightarrow{} \LL$ and $T_2:B\xrightarrow{} \LL$ be linear maps such that $\pi T_1 = I_B = \pi T_2$. Suppose also that $(\vp,\vp',f)$ is a factor system of $A$ by $B$ which belongs to the extension and $T_1$, and $(\psi,\psi',g)$ is a factor system of $A$ by $B$ which belongs to the extension and $T_2$. Then $(\vp,\vp',f)$ is equivalent to $(\psi,\psi',g)$.
\end{cor}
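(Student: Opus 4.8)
The plan is to recognize Corollary \ref{diff Ts} as the degenerate case of Theorem \ref{thm 3} in which both extensions coincide with the single given extension. Formally, I would set $\LL_1 = \LL_2 = \LL$, $\sigma_1 = \sigma_2 = \sigma$, and $\pi_1 = \pi_2 = \pi$. With these identifications, the hypotheses ``$(\vp,\vp',f)$ belongs to the extension and $T_1$'' and ``$(\psi,\psi',g)$ belongs to the extension and $T_2$'' match the hypotheses of Theorem \ref{thm 3} verbatim, with the two sections $T_1$ and $T_2$ playing the roles of the sections $T_1, T_2$ appearing there.

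The key observation is that any extension is equivalent to itself via the identity map. Taking $\tau = I_{\LL}$ yields an isomorphism $\LL \xrightarrow{} \LL$ satisfying $\tau\sigma = \sigma$ and $\pi\tau = \pi$, which is exactly the requirement for an equivalence of extensions. Thus the two (identical) extensions in the reformulation above are equivalent. Having established this, I would invoke the $(\impliedby)$ direction of Theorem \ref{thm 3} to conclude immediately that $(\vp,\vp',f)$ and $(\psi,\psi',g)$ are equivalent factor systems, which completes the proof.

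There is essentially no obstacle here: the entire content lies in the recognition that a single extension equipped with two sections is the special instance of Theorem \ref{thm 3} where the connecting isomorphism is the identity. As a sanity check on the mechanism, I note that the equivalence $E:B \xrightarrow{} A$ produced by the backward direction of Theorem \ref{thm 3} is characterized by $\tau\inv T_2(i) - T_1(i) = \sigma(E(i))$; with $\tau = I_{\LL}$ this reduces to $\sigma(E(i)) = T_2(i) - T_1(i)$, which is well-defined precisely because $\pi(T_2(i) - T_1(i)) = i - i = 0$ places $T_2(i) - T_1(i)$ in $\ima\sigma = \ker\pi$.
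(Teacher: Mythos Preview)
Your proposal is correct and matches the paper's own proof essentially verbatim: the paper simply notes that any extension is equivalent to itself and then invokes Theorem~\ref{thm 3} to conclude. Your additional sanity check identifying $E$ via $\sigma(E(i)) = T_2(i) - T_1(i)$ is a nice concrete supplement, but it goes beyond what the paper records.
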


\begin{cor}\label{equiv relation}
Equivalence of factor systems is an equivalence relation.
\end{cor}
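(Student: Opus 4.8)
The plan is to prove the three defining properties of an equivalence relation—reflexivity, symmetry, and transitivity—by transferring them from the corresponding properties of extension equivalence, which was already observed in the preliminaries to be an equivalence relation. The bridge is Theorem \ref{thm 3}, whose biconditional lets me translate any assertion about equivalence of factor systems into one about equivalence of extensions, together with Theorem \ref{thm 2}, which guarantees that any given factor system is realized as belonging to a concrete extension and section. So the first move is to invoke Theorem \ref{thm 2} to attach to each factor system under consideration an extension $\mathcal{E}$ and section $T$ to which it belongs; once every factor system is situated inside the correspondence, Theorem \ref{thm 3} does the rest.

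For reflexivity, given a factor system $(\vp,\vp',f)$ realized as belonging to $\mathcal{E}$ and $T$, I note that $\mathcal{E}$ is equivalent to itself via the identity isomorphism. Applying the ($\impliedby$) direction of Theorem \ref{thm 3} with both pieces of data equal to $(\mathcal{E},T)$ then yields that $(\vp,\vp',f)$ is equivalent to itself; concretely this is the equivalence $E=0$. For symmetry, suppose $(\vp,\vp',f)$ is equivalent to $(\psi,\psi',g)$, realized as belonging to $\mathcal{E}_1,T_1$ and $\mathcal{E}_2,T_2$ respectively. The ($\implies$) direction of Theorem \ref{thm 3} gives $\mathcal{E}_1$ equivalent to $\mathcal{E}_2$; since extension equivalence is symmetric, $\mathcal{E}_2$ is equivalent to $\mathcal{E}_1$, and the ($\impliedby$) direction returns that $(\psi,\psi',g)$ is equivalent to $(\vp,\vp',f)$. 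Transitivity runs the same way: an equivalence chain $(\vp,\vp',f)\sim(\psi,\psi',g)\sim(\chi,\chi',h)$ passes, via the ($\implies$) direction, to a chain of equivalent extensions, which composes by transitivity of extension equivalence, and a final application of the ($\impliedby$) direction delivers $(\vp,\vp',f)\sim(\chi,\chi',h)$.

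I do not expect any genuine obstacle here—this is a corollary precisely because the hard analytic content lives in Theorem \ref{thm 3}. The only thing demanding care is the logical bookkeeping: keeping straight which extension-and-section data each factor system is declared to belong to, and invoking the correct direction of the biconditional at each step. As an alternative I could argue directly from the definition of equivalent factor systems, taking $E=0$ for reflexivity, $-E$ for symmetry, and $E_1+E_2$ for transitivity, and then checking the three axioms case by case; this is valid but amounts to routine verification, whereas the route through Theorem \ref{thm 3} is essentially immediate. I also expect the companion statement Corollary \ref{diff Ts} to fall out in the same stroke, since it is just the instance of Theorem \ref{thm 3} in which the two extensions coincide and the equivalence $\tau$ is taken to be the identity, forcing the two factor systems built from $T_1$ and $T_2$ to be equivalent.
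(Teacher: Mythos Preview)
Your proposal is correct and follows essentially the same approach as the paper: the paper's one-line proof simply states that Corollary~\ref{equiv relation} ``follows from Theorem \ref{thm 3} and the fact that equivalence of extensions is an equivalence relation,'' and you have written out precisely this argument, with the added (and appropriate) care of invoking Theorem~\ref{thm 2} to ensure each factor system actually belongs to some extension before applying Theorem~\ref{thm 3}. Your remark on Corollary~\ref{diff Ts} likewise matches the paper's treatment.
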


\begin{proof}
For Corollary \ref{diff Ts}, note first that any extension of $A$ by $B$ is equivalent to itself. By Theorem \ref{thm 3}, factor systems belonging to this extension (and differing $T_i$) are equivalent. Corollary \ref{equiv relation} follows from Theorem \ref{thm 3} and the fact that equivalence of extensions is an equivalence relation.
\end{proof}

We now look to $E$. Given equivalent factor systems, there may be multiple equivalences between them. On the other hand, \textit{any} linear transformation $E:B\xrightarrow{} A$ defines an equivalence of factor systems, as demonstrated by Theorem \ref{any lin trans}.

\begin{thm} \label{any lin trans}
If $(\vp,\vp',f)$ is a factor system of $A$ by $B$ and $E$ is a linear transformation from $B$ to $A$, then there exists a factor system $(\psi,\psi',g)$ such that $E$ is an equivalence of $(\vp,\vp',f)$ with $(\psi,\psi',g)$. Furthermore, if $E$ is an equivalence, then $(\psi,\psi',g)$ is unique.
\end{thm}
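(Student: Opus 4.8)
The plan is to sidestep a direct seven-fold verification of the factor system axioms and instead lean on the correspondence established in Theorems \ref{thm 1} and \ref{thm 2}. The candidate for $(\psi,\psi',g)$ is forced by the definition of equivalence: one is obliged to set
\begin{align*}
\psi(i) &= \vp(i) + \ad^l_{E(i)}, \\
\psi'(i) &= \vp'(i) + \ad^r_{E(i)}, \\
g(i,j) &= f(i,j) + \vp'(j)E(i) + \vp(i)E(j) + E(i)E(j) - E(ij).
\end{align*}
These formulas already settle the uniqueness claim, so the substantive content is showing that this tuple is genuinely a factor system.

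To prove that $(\psi,\psi',g)$ satisfies the seven identities, I would realize it as a factor system belonging to an extension, thereby inheriting the axioms for free. First, apply Theorem \ref{thm 2} to $(\vp,\vp',f)$ to obtain the extension $0\to A\xrightarrow{\sigma}\LL\xrightarrow{\pi}B\to 0$ with $\LL=A\oplus B$ and section $T(i)=(0,i)$, to which $(\vp,\vp',f)$ belongs. Given the linear map $E$, define $T':B\to\LL$ by $T'(i)=T(i)+\sigma(E(i))=(E(i),i)$. Since $\pi T'(i)=i$, the map $T'$ is again a section of the \emph{same} extension, so by Theorem \ref{thm 1} there is a unique factor system belonging to the extension and $T'$, and this object is a bona fide factor system by construction.

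It then remains to compute that factor system from Definition \ref{belonging} and match it against the candidate. Using the multiplication on $\LL$ together with $\vp(0)=\vp'(0)=0$ and $f(i,0)=f(0,i)=0$, one finds $(PT')(i)m=\sigma\inv(T'(i)\sigma(m))=\vp(i)m+E(i)m$, hence $PT'=\vp+\ad^l_{E(\cdot)}$; symmetrically $P'T'=\vp'+\ad^r_{E(\cdot)}$; and computing $\sigma\inv\big(T'(i)T'(j)-T'(ij)\big)$ recovers precisely the stated $g$. Thus the factor system belonging to the extension and $T'$ is exactly $(\psi,\psi',g)$, which is therefore a factor system and is equated to $(\vp,\vp',f)$ by $E$, giving existence. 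Uniqueness is then immediate: if $E$ is an equivalence of $(\vp,\vp',f)$ with any factor system $(\tilde\psi,\tilde\psi',\tilde g)$, the three equivalence axioms express $\tilde\psi,\tilde\psi',\tilde g$ in terms of $\vp,\vp',f$, and $E$, forcing agreement with $(\psi,\psi',g)$.

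The one delicate point, and the main thing to get right, is the shift convention in defining $T'$: it must be $T'(i)=T(i)+\sigma(E(i))$ with a plus sign, so that the computed $g$ reproduces equivalence axiom 3 verbatim. This is consistent with the converse direction of Theorem \ref{thm 3}, where $E$ was recovered through $\tau\inv T_2(i)=T_1(i)+\sigma_1(n_i)$. Beyond this bookkeeping the argument is routine, precisely because Theorem \ref{thm 1} supplies the factor system property at no cost.
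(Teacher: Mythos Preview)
Your argument is correct and is a genuinely different route from the paper's. The paper proceeds by brute force: it writes down the same candidate $(\psi,\psi',g)$ and then verifies each of the seven factor system axioms by hand, expanding both sides and matching terms via the axioms of $(\vp,\vp',f)$ and the Leibniz identity. This occupies the bulk of the proof. You instead exploit the correspondence already in place: build the extension $L=A\oplus B$ from $(\vp,\vp',f)$ via Theorem~\ref{thm 2}, shift the section to $T'=T+\sigma E$, and let Theorem~\ref{thm 1} hand you a factor system for free; the short computation of $PT'$, $P'T'$, and $T'(i)T'(j)-T'(ij)$ then identifies it with the candidate. There is no circularity, since Theorems~\ref{thm 1} and~\ref{thm 2} are proved independently of the present statement. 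Your approach is considerably shorter and makes transparent \emph{why} the formulas must yield a factor system, namely because they arise from a genuine extension with a genuine section; the paper's approach, by contrast, is self-contained and displays the interplay between the Leibniz identity and each axiom explicitly, which has some expository value but is otherwise just bookkeeping. Your remark about the sign in $T'=T+\sigma E$ is on point and matches the convention in the converse half of Theorem~\ref{thm 3}.
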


\begin{proof}
Let $(\psi,\psi',g)$ be defined by \begin{enumerate}
    \item[i.] $\psi(i) = \vp(i) + \ad^l_{E(i)}$,
    \item[ii.] $\psi'(j) = \vp'(j) + \ad^r_{E(j)}$,
    \item[iii.] $g(i,j) = f(i,j) + \vp'(j)E(i) + \vp(i)E(j) + E(i)E(j) - E(ij)$
\end{enumerate} for $i,j\in B$. It is straightforward to check that $\psi,\psi':B\xrightarrow{} \mathscr{L}(A)$ are linear transformations and that $g:B\times B\xrightarrow{} A$ is a bilinear form. Now to verify that $(\psi,\psi',g)$ is a factor system.

\begin{enumerate}
    \item $\begin{aligned}[t] m(\psi(i)n) &= m(\vp(i)n) + m(E(i)n) \\ &= (\vp'(i)m)n + \vp(i)(mn) + (mE(i))n + E(i)(mn) & \text{f.s. axiom 1, Leibniz identity} \\ &= (\psi'(i)m)n + \psi(i)(mn), \end{aligned}$
    \item $\begin{aligned}[t] m(\psi'(i)n) &= m(\vp'(i)n) + m(nE(i)) \\ &= \vp'(i)(mn) + n(\vp'(i)m) + (mn)E(i) + n(mE(i)) & \text{f.s. axiom 2, Leibniz identity} \\ &= \psi'(i)(mn) + n(\psi'(i)m), \end{aligned}$
    \item $\begin{aligned}[t] mg(i,j) + \psi'(ij)m &= \underset{3}{\underbrace{mf(i,j)}} + \underset{2}{\underbrace{m(\vp'(j)E(i))}} + \underset{1}{\underbrace{m(\vp(i)E(j))}} + m(E(i)E(j)) - mE(ij) \\ &~~~~~~~~~~~~ +\underset{3}{\underbrace{\vp'(ij)m}} + mE(ij) \\ &= \underset{\text{f.s. axiom 3}}{\underbrace{\vp'(j)(\vp'(i)m) + \vp(i)(\vp'(j)m)}} + \underset{\text{f.s. axiom 2}}{\underbrace{\vp'(j)(mE(i)) + E(i)(\vp'(j)m)}} \\ &~~~~~~~~~~~~ + \underset{\text{f.s. axiom 1}}{\underbrace{(\vp'(i)m)E(j) + \vp(i)(mE(j))}} + \underset{\text{Leibniz}}{\underbrace{(mE(i))E(j) + E(i)(mE(j))}} \\ &= \psi'(j)(\psi'(i)m) + \psi(i)(\psi'(j)m),\end{aligned}$
    \item $\begin{aligned}[t] \psi(i)(mn) &= \vp(i)(mn) + E(i)(mn) \\ &= (\vp(i)m)n + m(\vp(i)n) + (E(i)m)n + m(E(i)n) & \text{f.s. axiom 4, Leibniz} \\ &= (\psi(i)m)n + m(\psi(i)n), \end{aligned}$
    \item $\begin{aligned}[t] \psi(i)(\psi(j)m) &= \underset{5}{\underbrace{\vp(i)(\vp(j)m)}} + \underset{4}{\underbrace{\vp(i)(E(j)m)}} + \underset{1}{\underbrace{E(i)(\vp(j)m)}} + \underset{\text{Leibniz}}{E(i)(E(j)m)}\\ &= \underset{\text{f.s. axiom 5}}{\underbrace{f(i,j)m + \vp(ij)m + \vp(j)(\vp(i)m)}} + \underset{\text{f.s. axiom 4}}{\underbrace{(\vp(i)E(j))m + E(j)(\vp(i)m)}}\\ &~~~~~~~~~~~~ +\underset{\text{f.s. axiom 1}}{\underbrace{(\vp'(i)E(i))m + \vp(j)(E(i)m)}} + \underset{\text{Leibniz}}{(E(i)E(j))m + E(j)(E(i)m)}\\ &~~~~~~~~~~~~ +E(ij)m - E(ij)m \\ &= \psi(ij)m + \psi(j)(\psi(i)m) + g(i,j)m, \end{aligned}$
    \item $\begin{aligned}[t] \psi(i)(\psi'(j)m) &= \underset{6}{\underbrace{\vp(i)(\vp'(j)m)}} + \underset{4}{\underbrace{\vp(i)(mE(j))}} + \underset{2}{\underbrace{E(i)(\vp'(j)m)}} + \underset{\text{Leibniz}}{E(i)(mE(j))} \\ &= \underset{\text{f.s. axiom 6}}{\underbrace{\vp'(j)(\vp(i)m) + mf(i,j) + \vp'(ij)m}} + \underset{\text{f.s. axiom 4}}{\underbrace{(\vp(i)m)E(j) + m(\vp(i)E(j))}} \\ &~~~~~~~~~~~~ +\underset{\text{f.s. axiom 2}}{\underbrace{\vp'(j)(E(i)m) + m(\vp'(j)E(i))}} + \underset{\text{Leibniz}}{(E(i)m)E(j) + m(E(i)E(j))} \\ &~~~~~~~~~~~~ + mE(ij) - mE(ij) \\ &= \psi'(j)(\psi(i)m) + \psi'(ij)m + mg(i,j),\end{aligned}$
    \item $\begin{aligned}[t] g(i,jk) + \psi(i)g(j,k) &= \underset{7}{f(i,jk)} + \underset{3}{\vp'(jk)E(i)} + \vp(i)E(jk) + E(i)E(jk) - \underset{\text{Leibniz}}{E(i(jk))} \\ & ~~~~~~~~~~~~ + \underset{7}{\vp(i)f(j,k)} + \underset{6}{\vp(i)(\vp'(k)E(j))} + \underset{5}{\vp(i)(\vp(j)E(k))} \\ &~~~~~~~~~~~~ + \underset{4}{\vp(i)(E(j)E(k))} - \vp(i)E(jk) + \underset{3}{E(i)f(j,k)} \\&~~~~~~~~~~~~ + \underset{2}{E(i)(\vp'(k)E(j))} + \underset{1}{E(i)(\vp(j)E(k))} + \underset{\text{Leibniz}}{E(i)(E(j)E(k))} \\ &~~~~~~~~~~~~ - E(i)E(jk) \\                                                         &= \underset{7}{f(ij,k)} + \vp'(k)E(ij) + \underset{5}{\vp(ij)E(k)} + E(ij)E(k) - \underset{\text{Leibniz}}{E((ij)k)} \\ &~~~~~~~~~~~~ + \underset{7}{\vp'(k)f(i,j)} + \underset{3}{\vp'(k)(\vp'(j)E(i))} + \underset{6}{\vp'(k)(\vp(i)E(j))} \\ &~~~~~~~~~~~~ +\underset{2}{\vp'(k)(E(i)E(j))} -\vp'(k)E(ij) + \underset{5}{f(i,j)E(k)} \\ &~~~~~~~~~~~~ + \underset{1}{(\vp'(j)E(i))E(k)} + \underset{4}{(\vp(i)E(j))E(k)} + \underset{\text{Leibniz}}{(E(i)E(j))E(k)}\\ &~~~~~~~~~~~~ - E(ij)E(k) +\underset{7}{f(j,ik)} + \underset{6}{\vp'(ik)E(j)} + \vp(j)E(ik) \\ &~~~~~~~~~~~~  + E(j)E(ik) - \underset{\text{Leibniz}}{E(j(ik))} + \underset{7}{\vp(j)f(i,k)} + \underset{3}{\vp(j)(\vp'(k)E(i))} \\ &~~~~~~~~~~~~ +\underset{5}{\vp(j)(\vp(i)E(k))} +\underset{1}{\vp(j)(E(i)E(k))} -\vp(j)E(ik) \\ &~~~~~~~~~~~~ + \underset{6}{E(j)f(i,k)} + \underset{2}{E(j)(\vp'(k)E(i))} + \underset{4}{E(j)(\vp(i)E(k))} \\ &~~~~~~~~~~~~ + \underset{\text{Leibniz}}{E(j)(E(i)E(k))} - E(j)E(ik) \\ &= g(ij,k) + \psi'(k)g(i,j) + g(j,ik) + \psi(j)g(i,k). \end{aligned}$
\end{enumerate}
The unmarked terms cancel in part 7. By construction, the two factor systems are equivalent with $E$ as their corresponding equivalence. It is straightforward to verify the uniqueness of $(\psi,\psi',g)$.
\end{proof}

\subsection{Split Extensions}

Before approaching split extensions, we discuss conditions under which $\vp:B\xrightarrow{} \Der(A)$ is a homomorphism. Let $(\vp,\vp',f)$ be a factor system of $A$ by $B$. By axiom 5 of factor systems, we have $\vp(i)\vp(j) = \vp(ij) + \vp(j)\vp(i) + \ad^l_{f(i,j)}$ for all $i,j\in B$. Then $\vp(ij) = [\vp(i),\vp(j)]$ holds if and only if $f(i,j)\in Z^l(A)$ for all $i,j\in B$. Hence $\vp$ is a homomorphism if and only if $f:B\times B\xrightarrow{} Z^l(A)$. Furthermore, if $Z^l(A)=0$, then $\vp$ is a homomorphism if and only if $f=0$. Finally, if $A$ is abelian, this ensures that $\ad^l_m = 0$ for all $m\in A$. Hence axiom 5 of factor systems again implies that $\vp$ is a homomorphism.

Recall that if an extension splits, then so does every equivalent extension. We thus say that a factor system \textit{splits} if and only if its corresponding extension splits. It follows that if a factor system splits, then so does every equivalent factor system. Now consider a split extension $0\xrightarrow{} A\xrightarrow{\sigma} \LL\xrightarrow{\pi} B\xrightarrow{} 0$ of $A$ by $B$ with associated homomorphism $T:B\xrightarrow{} \LL$ and let $(\vp,\vp',f)$ be a factor system belonging to this extension. Then $\sigma(f(i,j)) = T(i)T(j) - T(ij) = 0$ for all $i,j\in B$ which implies that $f=0$ since $\sigma$ is injective. Axiom 5 of factor systems then implies that $\vp$ is a homomorphism.

The following theorem will be quite useful for later proofs.

\begin{thm}\label{conditions for split}
Let $(\vp,\vp',f)$ be a factor system of $A$ by $B$. The following are equivalent:
\begin{itemize}
    \item[a.] $(\vp,\vp',f)$ splits;
    \item[b.] $(\vp,\vp',f)$ is equivalent to some factor system $(\psi,\psi',g)$ such that $g= 0$;
    \item[c.] there exists a linear transformation $E:B\xrightarrow{} A$ such that $f(i,j) = -\vp'(j)E(i) - \vp(i)E(j) - E(i)E(j) + E(ij)$ for all $i,j\in B$.
\end{itemize}
\end{thm}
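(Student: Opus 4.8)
The plan is to prove the equivalences by establishing a cycle of implications, $a \Rightarrow b \Rightarrow c \Rightarrow a$, drawing heavily on the machinery already built in Theorems \ref{thm 3} and \ref{any lin trans}. The logical skeleton is short because each arrow can be reduced to a prior result; the substance lies in matching the explicit formula in condition (c) to the equivalence axioms and to the splitting criterion.

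For $a \Rightarrow b$, I would argue that if $(\vp,\vp',f)$ splits, then by definition its corresponding extension splits, and we may choose the associated homomorphism section $T$. As observed in the paragraph preceding the theorem, a factor system $(\psi,\psi',g)$ belonging to the split extension and this homomorphism $T$ satisfies $\sigma(g(i,j)) = T(i)T(j) - T(ij) = 0$, whence $g = 0$ by injectivity of $\sigma$. Since both $(\vp,\vp',f)$ and this $(\psi,\psi',g)$ belong to the same extension (differing only in the choice of section), Corollary \ref{diff Ts} gives that they are equivalent, which is exactly condition (b). For $b \Rightarrow c$, I would simply unwind the definition of equivalence: if $(\vp,\vp',f)$ is equivalent to $(\psi,\psi',g)$ with $g = 0$, then there is a linear transformation $E:B \to A$ satisfying equivalence axiom 3, namely $0 = g(i,j) = f(i,j) + \vp'(j)E(i) + \vp(i)E(j) + E(i)E(j) - E(ij)$. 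Solving for $f(i,j)$ yields precisely the formula in condition (c).

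The implication $c \Rightarrow a$ is where I expect the real work, though even this is largely bookkeeping. Given $E$ satisfying the formula in (c), I would invoke Theorem \ref{any lin trans} to produce a factor system $(\psi,\psi',g)$ equivalent to $(\vp,\vp',f)$ via $E$. Computing $g$ from equivalence axiom 3 and substituting the hypothesized expression for $f(i,j)$, the terms cancel to give $g = 0$. A factor system with $g = 0$ corresponds, via the construction of Theorem \ref{thm 2}, to an extension in which $T(i)T(j) = T(ij)$, so the section $T(i) = (0,i)$ is a homomorphism and the extension splits. Then Theorem \ref{thm 3} transfers splitting back across the equivalence: since $(\vp,\vp',f)$ is equivalent to the split factor system $(\psi,\psi',g)$, and equivalence of factor systems corresponds to equivalence of extensions, and splitting is preserved under equivalence of extensions, it follows that $(\vp,\vp',f)$ splits as well.

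The main obstacle, insofar as there is one, is purely organizational: keeping straight which direction of the factor-system/extension correspondence is being used at each step and ensuring that ``splits'' for a factor system is consistently interpreted through its corresponding extension. No genuinely new computation is needed, since Theorem \ref{any lin trans} already certifies that any $E$ produces a valid equivalent factor system; the cancellation showing $g = 0$ is immediate from the algebra of condition (c) and equivalence axiom 3. I would therefore present the cycle crisply, citing the earlier results for each arrow and performing only the one-line substitution that turns the formula in (c) into the statement $g = 0$.
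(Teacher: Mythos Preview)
Your proposal is correct and follows essentially the same cycle $a \Rightarrow b \Rightarrow c \Rightarrow a$ as the paper, invoking Corollary \ref{diff Ts} for the first implication, reading off equivalence axiom 3 for the second, and combining Theorem \ref{any lin trans} with Theorem \ref{thm 2} for the third. The only cosmetic difference is that you spell out the role of Theorem \ref{thm 3} in transferring the split back to $(\vp,\vp',f)$, whereas the paper leaves this implicit in the phrase ``therefore, so does the original factor system.''
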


\begin{proof}
(a.$\implies$b.) We know $(\vp,\vp',f)$ belongs to a split extension $0\xrightarrow{} A\xrightarrow{\sigma} \LL\xrightarrow{\pi} B\xrightarrow{} 0$. By definition, there is an associated homomorphism $T:B\xrightarrow{} \LL$ such that $\pi T = I_B$. Hence there exists a factor system $(\psi,\psi',g)$ belonging to $0\xrightarrow{} A\xrightarrow{\sigma} \LL\xrightarrow{\pi} B\xrightarrow{} 0$ and $T$ which is equivalent to $(\vp,\vp',f)$ by Corollary \ref{diff Ts}. Since $T$ is a homomorphism, we have $g=0$.

(b.$\implies$c.) Let $E:B\xrightarrow{} A$ be an equivalence of $(\vp,\vp',f)$ with $(\psi,\psi',g)$ where $g=0$. The third axiom of equivalence gives $0=g(i,j) = f(i,j)+ \vp'(j)E(i) + \vp(i)E(j) + E(i)E(j) - E(ij)$ for all $i,j\in B$, which implies the desired equality.

(c.$\implies$a.) Let $E$ be as in c. By Theorem \ref{any lin trans}, $E$ is an equivalence of $(\vp,\vp',f)$ with another factor system $(\psi,\psi',g)$ which belongs to an extension $0\xrightarrow{} A\xrightarrow{\sigma} \LL\xrightarrow{\pi} B\xrightarrow{} 0$ and $T:B\xrightarrow{}\LL$. One has $g(i,j) = f(i,j) + \vp'(j)E(i) + \vp(i)E(j) + E(i)E(j) -E(ij)=0$ by assumption. Then, since $\sigma(g(i,j)) = 0$ for all $i,j\in B$, the third axiom of belonging implies that $T$ is a homomorphism. Also, $T$ is injective since $\pi T = I_B$. Hence the extension splits and, therefore, so does the original factor system.
\end{proof}

It is clear that every semidirect sum yields a split extension. The converse is also true in that every split extension of $A$ by $B$ is equivalent to a semidirect sum. Indeed, let $0\xrightarrow{} A\xrightarrow{} \LL\xrightarrow{} B \xrightarrow{} 0$ be a split extension of $A$ by $B$. By Theorem \ref{conditions for split}, there is an equivalent extension $0\xrightarrow{} A\xrightarrow{} \LL_2\xrightarrow{} B\xrightarrow{} 0$ with associated linear map $T_2:B\xrightarrow{}\LL_2$ and a factor system $(\psi,\psi',g)$ belonging to this extension and $T_2$ such that $g(i,j)= 0$ for all $i,j\in B$. Thus $\psi$ is a homomorphism. The Leibniz algebra construct in Theorem \ref{thm 2} is then a semidirect sum of $A$ by $B$ with factor system $(\psi,\psi',g)$. By Theorem \ref{thm 3}, the extension built in Theorem \ref{thm 2} is equivalent to $0\xrightarrow{}A\xrightarrow{} \LL_2\xrightarrow{} B\xrightarrow{} 0$ and hence to $0\xrightarrow{}A\xrightarrow{} \LL\xrightarrow{} B\xrightarrow{}0$.

\subsection{Abelian $A$}
Let $A$ be an abelian Leibniz algebra and $(\vp,\vp',f)$ be a factor system of $A$ by $B$. Then $\vp$ is a homomorphism. Moreover, suppose a factor system $(\psi,\psi',g)$ of $A$ by $B$ is equivalent to $(\vp,\vp',f)$ via equivalence $E$. Then $\vp(i) = \psi(i) + \ad^l_{E(i)} = \psi(i)$ and $\vp'(i) = \psi'(i) + \ad^r_{E(i)} = \psi'(i)$ for all $i\in B$ which implies that $\vp = \psi$ and $\vp' = \psi'$. We thus fix $\vp$ and $\vp'$ and define the following constructs.

Let $\Fact(B,A,\vp,\vp')$ be the set of bilinear maps $f:B\times B\xrightarrow{} A$ such that $(\vp,\vp',f)$ is a factor system and let $\T(B,A,\vp,\vp')$ be the set of bilinear maps $f:B\times B\xrightarrow{} A$ such that $(\vp,\vp',f)$ is a split factor system. We denote by $\Ext(B,A,\vp,\vp')$ the set of equivalence classes $\Fact(B,A,\vp,\vp')/\T(B,A,\vp,\vp')$ with fixed $\vp$ and $\vp'$.

\begin{thm}\label{abelian algebras}
If $A$ is abelian, then
\begin{enumerate}
    \item $\Fact(B,A,\vp,\vp')$ is an abelian Leibniz algebra,
    \item $\T(B,A,\vp,\vp')$ is an ideal in $\Fact(B,A,\vp,\vp')$,
    \item factor systems $(\vp,\vp',f)$ and $(\vp,\vp',g)$ are equivalent if and only if $f$ and $g$ are in the same coset of $\Fact(B,A,\vp,\vp')$ relative to $\T(B,A,\vp,\vp')$,
    \item the quotient Leibniz algebra $\Ext(B,A,\vp,\vp')$ is in one-to-one correspondence with the set of equivalence classes of extensions to which $\vp$ and $\vp'$ belong.
\end{enumerate}
\end{thm}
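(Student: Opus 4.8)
The plan is to exploit the drastic collapse of the factor-system axioms when $A$ is abelian, which turns everything into linear algebra. Since $A$ abelian forces $\ad^l_m = \ad^r_m = 0$ and $mn = 0$ for all $m,n\in A$, axioms 1, 2, and 4 become trivial; axioms 3, 5, and 6 lose their $\ad_{f(i,j)}$ terms and thereby become conditions on the fixed pair $(\vp,\vp')$ alone, automatically satisfied since $(\vp,\vp',f)$ was assumed to be a factor system; and axiom 7 is the only surviving constraint on $f$. As axiom 7 is homogeneous and linear in $f$, its solution set inside the space $\Bil(B\times B,A)$ of bilinear maps is a linear subspace containing $0$. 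This proves part 1: $\Fact(B,A,\vp,\vp')$ is a vector space, which I equip with the zero multiplication so as to view it as an abelian Leibniz algebra.

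For part 2, I would introduce the coboundary operator $d\colon \Hom(B,A)\xrightarrow{} \Bil(B\times B,A)$ defined by $(dE)(i,j) = E(ij) - \vp(i)E(j) - \vp'(j)E(i)$, which is linear in $E$ because the quadratic term $E(i)E(j)$ appearing in the general split formula vanishes when $A$ is abelian. Applying Theorem \ref{any lin trans} to the factor system $(\vp,\vp',0)$ with equivalence $-E$ shows that each $dE$ is itself the $f$-component of a genuine factor system, so $\ima d \subseteq \Fact(B,A,\vp,\vp')$; conversely, Theorem \ref{conditions for split} (condition c, with the abelian simplification) identifies $\T(B,A,\vp,\vp')$ precisely with $\ima d$. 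Hence $\T(B,A,\vp,\vp')$ is a subspace of $\Fact(B,A,\vp,\vp')$, and because the latter carries the zero bracket, every subspace is automatically an ideal.

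Part 3 is then a direct unwinding of the definition of equivalence. When $A$ is abelian the first two equivalence axioms reduce to $\psi=\vp$ and $\psi'=\vp'$, while the third reads $g(i,j) = f(i,j) - (dE)(i,j)$. Thus $(\vp,\vp',f)$ and $(\vp,\vp',g)$ are equivalent if and only if $g-f\in\ima d = \T(B,A,\vp,\vp')$, i.e. if and only if $f$ and $g$ lie in the same coset modulo $\T(B,A,\vp,\vp')$.

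Finally, part 4 assembles the correspondence theorems. I would define a map $\Phi$ from $\Ext(B,A,\vp,\vp')=\Fact(B,A,\vp,\vp')/\T(B,A,\vp,\vp')$ to the set of equivalence classes of extensions to which $\vp,\vp'$ belong by sending the coset of $f$ to the class of the extension built from $(\vp,\vp',f)$ in Theorem \ref{thm 2}. Well-definedness follows from part 3 together with Theorem \ref{thm 3}; injectivity follows from Theorem \ref{thm 3} (equivalent extensions force their belonging factor systems to be equivalent) combined with part 3; and surjectivity follows from Theorem \ref{thm 1}, which produces for any such extension a belonging factor system $(\vp,\vp',f)$ whose Theorem \ref{thm 2} extension is equivalent to the original by Theorem \ref{thm 3}. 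The main obstacle I anticipate is bookkeeping rather than conceptual: one must keep the sign conventions in $d$ consistent between Theorem \ref{conditions for split} and the equivalence axioms, and in part 4 carefully justify that ``the $\vp,\vp'$ to which an extension belongs'' is unambiguous, which is exactly the section-independence of $\vp,\vp'$ guaranteed by abelianness and recorded at the start of this subsection.
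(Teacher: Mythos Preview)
Your proposal is correct and follows essentially the same approach as the paper: both exploit the abelian simplification to reduce axioms 1--6 to conditions on $(\vp,\vp')$ alone and axiom 7 to a linear condition on $f$, both invoke Theorem \ref{conditions for split} to characterize $\T(B,A,\vp,\vp')$, and both derive parts 3 and 4 from the equivalence definition together with Theorem \ref{thm 3}. Your packaging of $\T$ as the image of the linear coboundary operator $d$ is a mild cosmetic improvement over the paper's direct verification that $f-cg$ splits via $E=E_1-cE_2$, and your part 4 spells out the well-definedness, injectivity, and surjectivity of the bijection more explicitly than the paper's one-line appeal to Theorem \ref{thm 3} and part 3, but the substance is identical.
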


\begin{proof}
Let $f,g\in \Fact(B,A,\vp,\vp')$ and $c$ be a scalar. We know $f-cg:B\times B\xrightarrow{} A$ and want to show that $(\vp,\vp',f-cg)$ is a factor system. Axioms 1, 2, and 4 are trivial since they do not involve $f$ or $g$. Axioms 3, 5, and 6 hold since $\ad^l_{(f-cg)(i,j)} = 0$ and $\ad^r_{(f-cg)(i,j)} = 0$ for any $i,j\in B$. Finally, axiom 7 holds by the following computation:\begin{align*}
    (f-cg)(i,jk) + \vp(i)(f-cg)(j,k) & = f(i,jk) + \vp(i)f(j,k) - c(g(i,jk) + \vp(i)g(j,k)) \\ &= f(ij,k) + \vp'(k)f(i,j) + f(j,ik) + \vp(j)f(i,k) \\&~~ - c(g(ij,k) + \vp'(k)g(i,j) + g(j,ik) + \vp(j)g(i,k) ) \\ &= (f-cg)(ij,k) + \vp'(k)(f-cg)(i,j) + (f-cg)(j,ik) \\ &~~ + \vp(j)(f-cg)(i,k).
\end{align*}
Hence $\Fact(B,A,\vp,\vp')$ is a vector space. One easily checks that $(\vp,\vp',fg)$ is a factor system (here juxtaposition denotes $f(i,j)g(i,j)$); indeed, $fg = 0$ since $A$ is abelian. Thus $\Fact(B,A,\vp,\vp')$ is a Leibniz algebra with trivial multiplication.

To show that $\T(B,A,\vp,\vp')$ is an ideal, it suffices to verify that it is a subspace. Let $f,g\in \mathscr{T}(B,A,\vp,\vp')$. We want to show that $(\vp,\vp',f-cg)$ is a split factor system for scalar $c$. By Theorem \ref{conditions for split}, since $(\vp,\vp',f)$ and $(\vp,\vp',g)$ split, there exist linear maps $E_1,E_2:B\xrightarrow{} A$ such that $f(i,j) = -\vp(j)E_1(i) - \vp(i)E_1(j) - E_1(i)E_1(j) + E_1(ij)$ and $g(i,j) = -\vp(j)E_2(i) - \vp(i)E_2(j) - E_2(i)E_2(j) + E_2(ij)$ for any $i,j\in B$. Define $E:B\xrightarrow{} A$ by $E = E_1 - cE_2$. Then $(f-cg)(i,j) = -\vp(j)E(i) + \vp'(i)E(j) - E(i)E(j) + E(ij)$ and so Theorem \ref{conditions for split} says that $(\vp,\vp',f-cg)$ splits.

Suppose factor systems $(\vp,\vp',f)$ and $(\vp,\vp',g)$ are equivalent via $E:B\xrightarrow{} A$. Then $g(i,j) = f(i,j) + \vp'(j)E(i) + \vp(i)E(j) + E(i)E(j) - E(ij)$ implies that $(f-g)(i,j) = -\vp'(j)E(i) - \vp(i)E(j) - E(i)E(j) + E(ij)$. By Theorem \ref{conditions for split}, factor system $(\vp,\vp',f-g)$ splits. Hence $f+\T(B,A,\vp,\vp') = g+\T(B,A,\vp,\vp')$. Conversely, if $(\vp,\vp',f-g)$ is a split factor system, then there exists a linear map $E:B\xrightarrow{} A$ such that $(f-g)(i,j) = -\vp'(j)E(i) - \vp(i)E(j) - E(i)E(j) + E(ij)$ for all $i,j\in B$ (by Theorem \ref{conditions for split}). Thus $E$ satisfies the third axiom of equivalence between factor systems $(\vp,\vp',f)$ and $(\vp,\vp',g)$. The first two axioms of equivalence hold trivially with $\vp=\vp$ and $\vp'=\vp'$ since $\ad^l_m=0$ and $\ad^r_m=0$ for all $m\in A$.

The final statement follows from Theorem \ref{thm 3} and part 3 above. Indeed, part 3 says that two elements of $\Ext(B,A,\vp,\vp')$ are equal if and only if their factor systems with fixed $\vp$, $\vp'$ are equivalent, and Theorem \ref{thm 3} guarantees that the latter statement is true if and only if the two extensions are equivalent.
\end{proof}

\subsection{Central Extensions}
Recall that an extension which is equivalent to a central extension is itself central. One may thus refer to equivalence classes of central extensions and to \textit{central factor systems}, i.e. factor systems that belong to central extensions. Once again, let $A$ be an abelian Leibniz algebra and $(\vp,\vp',f)$ be a factor system of $A$ by $B$.

\begin{thm}\label{central conditions}
$(\vp,\vp',f)$ is central if and only if $\vp=0$ and $\vp'=0$.
\end{thm}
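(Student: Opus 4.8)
The statement is a biconditional, and my plan is to prove each direction directly from the explicit formulas $\vp = PT$, $\vp' = P'T$ of Definition \ref{belonging}, where $P(x)m = \sigma\inv(x\sigma(m))$ and $P'(x)m = \sigma\inv(\sigma(m)x)$. Throughout I take $Z(L)$ to be the two-sided center, i.e. the set of $z\in L$ with $zx = xz = 0$ for all $x\in L$.

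For the forward implication, I would assume $(\vp,\vp',f)$ belongs to a central extension $0\xrightarrow{} A\xrightarrow{\sigma} L\xrightarrow{\pi} B\xrightarrow{} 0$ and section $T$, so that $\sigma(A)\subseteq Z(L)$. Then for any $i\in B$ and $m\in A$ the element $\sigma(m)$ is central, hence $T(i)\sigma(m) = 0$ and $\sigma(m)T(i) = 0$. Feeding these into the formulas gives $\vp(i)m = \sigma\inv(T(i)\sigma(m)) = 0$ and $\vp'(i)m = \sigma\inv(\sigma(m)T(i)) = 0$, so that $\vp = 0$ and $\vp' = 0$ regardless of the choice of $T$.

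For the converse, since it suffices to produce a single central extension to which the factor system belongs, I would use the construction of Theorem \ref{thm 2}: form $L = A\oplus B$ with $(m,i)(n,j) = (mn + \vp(i)n + \vp'(j)m + f(i,j), ij)$. As $A$ is abelian and $\vp = \vp' = 0$, this reduces to $(m,i)(n,j) = (f(i,j), ij)$. To check that $\sigma(A) = A\oplus 0$ lies in $Z(L)$, I would multiply a general $(m,0)$ against an arbitrary $(n,j)$ on both sides; bilinearity forces $f(0,j) = f(j,0) = 0$, so both products vanish. Thus the extension is central, and by part 4 of Theorem \ref{thm 2} the factor system belongs to it, making $(\vp,\vp',f)$ central.

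I do not expect a genuine computational obstacle, as each direction is essentially a one-line substitution once the formulas are in place. The only points demanding care are fixing the two-sided definition of $Z(L)$ in the Leibniz setting --- so that both $T(i)\sigma(m)$ and $\sigma(m)T(i)$ are forced to zero --- and noting that the abelianness of $A$ is precisely what removes the $mn$ term from the multiplication on the constructed algebra in the converse.
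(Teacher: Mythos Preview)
Your proof is correct and your converse coincides with the paper's argument; the paper in fact handles both directions at once by working inside the explicit extension $L = A\oplus B$ of Theorem~\ref{thm 2} and noting that $\sigma(A)=A\oplus 0$ is central there iff $(m,i)(n,0)$ and $(n,0)(m,i)$ vanish identically, which (given $A$ abelian) is exactly $\vp=\vp'=0$. Your forward direction differs only in that you argue from an arbitrary central extension via the belonging formulas $\vp=PT$, $\vp'=P'T$ rather than from this specific model --- a harmless and equally short variant.
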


\begin{proof}
By Theorem \ref{thm 2}, $(\vp,\vp',f)$ belongs to an extension $0\xrightarrow{} A\xrightarrow{}\LL \xrightarrow{} B\xrightarrow{} 0$ where $\LL=A\oplus B$ with multiplication $(m,i)(n,j) = (mn+\vp(i)n + \vp'(j)m +f(i,j) ~,~ ij)$. The extension is central if and only if $(m,i)(n,0) = (mn+ \vp(i)n,0) = (0,0)$ and $(n,0)(m,i) = (nm+ \vp'(i)n,0) = (0,0)$ for all $m,n\in A$ and $i\in B$. This happens if and only if $\vp$ and $\vp'$ are zero.
\end{proof}

\begin{thm}
The classes of central extensions of $A$ by $B$ form a Leibniz algebra, denoted $\Cext(B,A)$.
\end{thm}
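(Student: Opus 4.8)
The plan is to identify $\Cext(B,A)$ with the Leibniz algebra $\Ext(B,A,0,0)$ furnished by Theorem \ref{abelian algebras} in the special case $\vp = 0$ and $\vp' = 0$. The key observation is Theorem \ref{central conditions}: since $A$ is abelian, a factor system of $A$ by $B$ is central precisely when $\vp = 0$ and $\vp' = 0$. Thus the central factor systems are exactly the elements of $\Fact(B,A,0,0)$, for which only the seventh axiom carries content, reducing to the $2$-cocycle identity $f(i,jk) = f(ij,k) + f(j,ik)$.

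First I would pin down the correspondence between central extensions and $\Fact(B,A,0,0)$. If an extension $0\xrightarrow{} A\xrightarrow{\sigma} \LL\xrightarrow{\pi} B\xrightarrow{} 0$ is central, then $\sigma(A)\subseteq Z(\LL)$ forces $x\sigma(m) = 0 = \sigma(m)x$ for all $x\in \LL$ and $m\in A$, whence $P = P' = 0$ and therefore $\vp = PT = 0$ and $\vp' = P'T = 0$ for every section $T$. Hence every factor system belonging to a central extension lies in $\Fact(B,A,0,0)$. Conversely, by Theorem \ref{thm 2} each $f\in \Fact(B,A,0,0)$ belongs to a genuine extension, which is central by Theorem \ref{central conditions}.

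With this in hand, I would apply Theorem \ref{abelian algebras} verbatim at $\vp = \vp' = 0$: part 1 makes $\Fact(B,A,0,0)$ an abelian Leibniz algebra, part 2 makes $\T(B,A,0,0)$ an ideal, and the quotient $\Ext(B,A,0,0)$ is therefore itself a Leibniz algebra. Part 4 places this quotient in bijection with the equivalence classes of extensions to which $\vp = 0$ and $\vp' = 0$ belong, which by the previous paragraph are exactly the equivalence classes of central extensions. Defining $\Cext(B,A) := \Ext(B,A,0,0)$ and transporting the algebra structure along this bijection completes the argument.

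The only delicate point is verifying that the bijection of Theorem \ref{abelian algebras}(4) really does enumerate all central extension classes and no others; this is where one must be sure that centrality is a property of the equivalence class (already noted in the preliminaries) and that it corresponds exactly to the vanishing of $\vp$ and $\vp'$ for every belonging factor system. Granting the two directions established above, there is no further obstacle: the Leibniz algebra structure on $\Cext(B,A)$ is inherited wholesale from $\Ext(B,A,0,0)$, and no computation beyond Theorem \ref{abelian algebras} is required.
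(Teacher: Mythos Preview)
Your proposal is correct and follows precisely the paper's own approach: the paper's proof is the one-line observation that, by Theorem \ref{abelian algebras} and Theorem \ref{central conditions}, the classes of central extensions form the Leibniz algebra $\Ext(B,A,0,0) =: \Cext(B,A)$. Your write-up simply unpacks this in more detail than the paper does.
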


\begin{proof}
By Theorem \ref{abelian algebras} and Theorem \ref{central conditions}, said classes form the Leibniz algebra $\Ext(B,A,0,0) =: \Cext(B,A)$.
\end{proof}

\begin{thm}
Let $A$ and $B$ be abelian Leibniz algebras and let $(\vp,\vp',f)$ be a central factor system of $A$ by $B$. Then $(\vp,\vp',f)$ belongs to an abelian extension if and only if $f=0$.
\end{thm}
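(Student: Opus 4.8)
The plan is to first strip the factor system down using the centrality hypothesis and then dispatch each direction of the biconditional by a short computation with the belonging axiom. Since $(\vp,\vp',f)$ is assumed central, Theorem \ref{central conditions} immediately gives $\vp = 0$ and $\vp' = 0$, so I am really working with a factor system of the form $(0,0,f)$. This reduction is what makes both directions transparent, and it is the step where centrality genuinely enters.

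For the converse direction ($f=0 \implies$ abelian extension), I would simply exhibit a witnessing extension using the construction of Theorem \ref{thm 2}. That theorem realizes $(0,0,f)$ as a factor system belonging to the extension on $\LL = A\oplus B$ with multiplication $(m,i)(n,j) = (mn + \vp(i)n + \vp'(j)m + f(i,j)\,,\, ij)$. I would then observe that every term in the first coordinate dies: $mn = 0$ because $A$ is abelian, $ij = 0$ because $B$ is abelian, $\vp(i)n = \vp'(j)m = 0$ because $\vp = \vp' = 0$, and $f(i,j) = 0$ by assumption. Hence every product in $\LL$ is $(0,0)$, so $\LL$ is abelian and $(0,0,f)$ belongs to an abelian extension.

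For the forward direction ($(0,0,f)$ belongs to an abelian extension $\implies f=0$), I would take any abelian extension $0\xrightarrow{} A\xrightarrow{\sigma} \LL\xrightarrow{\pi} B\xrightarrow{} 0$ and section $T$ to which $(0,0,f)$ belongs, and invoke the belonging relation $\sigma(f(i,j)) = T(i)T(j) - T(ij)$. Here the two abelianness hypotheses play distinct roles: $\LL$ abelian forces $T(i)T(j) = 0$, while $B$ abelian gives $ij = 0$, so that $T(ij) = T(0) = 0$ by linearity of $T$. Therefore $\sigma(f(i,j)) = 0$ for all $i,j\in B$, and since $\sigma$ is injective we conclude $f = 0$. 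I do not anticipate a genuine obstacle in this proof; the only subtlety worth flagging is keeping the roles of the three hypotheses separate — $B$ abelian kills $T(ij)$, $\LL$ abelian kills $T(i)T(j)$, and centrality (through $\vp=\vp'=0$) is precisely what is needed to make the canonical extension abelian in the converse.
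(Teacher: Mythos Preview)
Your proposal is correct and follows essentially the same approach as the paper: both invoke Theorem~\ref{central conditions} to reduce to $\vp=\vp'=0$, use the belonging relation $\sigma(f(i,j)) = T(i)T(j) - T(ij)$ together with abelianness of $L$ and $B$ (and injectivity of $\sigma$) for the forward direction, and appeal to the explicit construction of Theorem~\ref{thm 2} for the converse. Your write-up is simply more explicit about which hypothesis kills which term.
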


\begin{proof}
Since $(\vp,\vp',f)$ is central, we know $\vp=\vp'=0$. In the forward direction, the factor system belongs to an extension $0\xrightarrow{} A\xrightarrow{} L\xrightarrow{} B\xrightarrow{}0$ and section $T$. Since $L$ and $B$ are both abelian, one has $\sigma(f(i,j)) = T(i)T(j) - T(ij) = 0$ for all $i,j\in B$. Conversely, if $f=0$, then the construction of $L$ in Theorem \ref{thm 2} has multiplication $(m,i)(n,j) = (0,0)$ for all $m,n\in A$ and $i,j\in B$.
\end{proof}

\section{Factor Systems of Diassociative Algebras}
This section mimics the structure of the Leibniz section. We begin by stating the definitions of factor systems (for diassociative algebras and their corresponding special cases) and proceed to construct diassociative analogues of the results from \cite{scott}.

\begin{defn}
Let $A$ and $B$ be diassociative algebras. A \textit{factor system} of $A$ by $B$ is a tuple $(\vp_{\dashv},\vp_{\vdash},\vp_{\dashv}',\vp_{\vdash}',f_{\dashv},f_{\vdash})$ of maps such that $\vp_{\dashv},\vp_{\vdash},\vp_{\dashv}',\vp_{\vdash}':B\xrightarrow{} \mathscr{\LL}(A)$ are linear, $f_{\dashv},f_{\vdash}:B\times B\xrightarrow{} A$ are bilinear, and the following five sets of identities are satisfied for all $m,n,p\in A$ and $i,j,k\in B$:
\begin{enumerate}[topsep=1pt, partopsep=0pt, parsep = 1pt]
    \item Those resembling D1:
\begin{enumerate}[topsep=0pt, noitemsep, partopsep=0pt, parsep=0pt]
    \item $m\dashv(\vp\dd(j)p) = m\dashv(\vp\vv(j)p)$
    \item  $m\dashv(\vp\dd'(k)n) = m\dashv(\vp\vv'(k)n)$
    \item $m\dashv f\dd(j,k) + \vp\dd'(j\dashv k)m = m\dashv f\vv(j,k) + \vp\dd'(j\vdash k)m$
    \item $\vp\dd(i)(n\dashv p) = \vp\dd(i)(n\vdash p)$
    \item $\vp\dd(i)(\vp\dd(j)p) = \vp\dd(i)(\vp\vv(j)p)$
    \item $\vp\dd(i)(\vp\dd'(k)n) = \vp\dd(i)(\vp\vv'(k)n)$
    \item $\vp\dd(i)f\dd(j,k) + f\dd(i,j\dashv k) = \vp\dd(i)f\vv(j,k) + f\dd(i,j\vdash k)$
\end{enumerate}

\item Those resembling D2:
\begin{enumerate}[topsep=0pt, noitemsep, partopsep=0pt, parsep=0pt]
    \item $(\vp\vv(i)n)\dashv p = \vp\vv(i)(n\dashv p)$
    \item $(\vp\vv'(j)m)\dashv p = m\vdash(\vp\dd(j)p)$
    \item $f\vv(i,j)\dashv p + \vp\dd(i\vdash j)p = \vp\vv(i)(\vp\dd(j)p)$
    \item $\vp\dd'(k)(m\vdash n) = m\vdash(\vp\dd'(k)n)$
    \item $\vp\dd'(k)(\vp\vv(i)n) = \vp\vv(i)(\vp\dd'(k)n)$
    \item $\vp\dd'(k)(\vp\vv'(j)m) = \vp\vv'(j\dashv k)m$
    \item $\vp\dd'(k)f\vv(i,j) + f\dd(i\vdash j,k) = \vp\vv(i)f\dd(j,k) + f\vv(i,j\dashv k)$
\end{enumerate}

\item Those resembling D3:
\begin{enumerate}[topsep=0pt, noitemsep, partopsep=0pt, parsep=0pt]
    \item $(\vp\dd(i)n)\vdash p = (\vp\vv(i)n)\vdash p$
    \item $(\vp\dd'(j)m)\vdash p = (\vp\vv'(j)m)\vdash p$
    \item $f\dd(i,j)\vdash p + \vp\vv(i\dashv j)p = f\vv(i,j)\vdash p + \vp\vv(i\vdash j)p$
    \item $\vp\vv'(k)(m\dashv n) = \vp\vv'(k)(m\vdash n)$
    \item $\vp\vv'(k)(\vp\dd(i)n) = \vp\vv'(k)(\vp\vv(i)n)$
    \item $\vp\vv'(k)(\vp\dd'(j)m) = \vp\vv'(k)(\vp\vv'(j)m)$
    \item $\vp\vv'(k)f\dd(i,j) + f\vv(i\dashv j,k) = \vp\vv'(k)f\vv(i,j) + f\vv(i\vdash j,k)$
\end{enumerate}

\item Those resembling the associativity of $\dashv$:
\begin{enumerate}[topsep=0pt, noitemsep, partopsep=0pt, parsep=0pt]
    \item $m\dashv(\vp\dd(j)p) = (\vp\dd'(j)m)\dashv p$
    \item $m\dashv(\vp\dd'(k)n) = \vp\dd'(k)(m\dashv n)$
    \item $m\dashv f\dd(j,k) + \vp\dd'(j\dashv k)m = \vp\dd'(k)(\vp\dd'(j)m)$
    \item $\vp\dd(i)(n\dashv p) = (\vp\dd(i)n)\dashv p$
    \item $\vp\dd(i)(\vp\dd(j)p) = \vp\dd(i\dashv j)p +f\dd(i,j)\dashv p$
    \item $\vp\dd(i)(\vp\dd'(k)n) = \vp\dd'(k)(\vp\dd(i)n)$
    \item $\vp\dd(i)f\dd(j,k) + f\dd(i,j\dashv k) = \vp\dd'(k)f\dd(i,j) + f\dd(i\dashv j,k)$
\end{enumerate}

\item Those resembling the associativity of $\vdash$:
\begin{enumerate}[topsep=0pt, noitemsep, partopsep=0pt, parsep=0pt]
    \item $m\vdash(\vp\vv(j)p) = (\vp\vv'(j)m)\vdash p$
    \item $m\vdash(\vp\vv'(k)n) = \vp\vv'(k)(m\vdash n)$
    \item $m\vdash f\vv(j,k) + \vp\vv'(j\vdash k)m = \vp\vv'(k)(\vp\vv'(j)m)$
    \item $\vp\vv(i)(n\vdash p) = (\vp\vv(i)n)\vdash p$
    \item $\vp\vv(i)(\vp\vv(j)p) = \vp\vv(i\vdash j)p + f\vv(i,j)\vdash p$
    \item $\vp\vv(i)(\vp\vv'(k)n) = \vp\vv'(k)(\vp\vv(i)n)$
    \item $\vp\vv(i)f\vv(j,k) + f\vv(i,j\vdash k) = \vp\vv'(k)f\vv(i,j) + f\vv(i\vdash j,k)$
\end{enumerate}
\end{enumerate}
\end{defn}

\begin{defn}
Let $A$ and $B$ be associative algebras. A \textit{factor system} of $A$ by $B$ is a tuple of maps $(\vp,\vp',f)$ where $\vp,\vp':B\xrightarrow{} \mathscr{L}(A)$ are linear, $f:B\times B\xrightarrow{} A$ is bilinear, and \begin{enumerate}[]
    \item $\vp(i)\vp(j) = \vp(ij) + \ad_{f(i,j)}^l$
    \item $\vp'(i)\vp'(j) = \vp'(ji) + \ad_{f(i,j)}^r$
    \item $\vp(i)\vp'(j) = \vp'(j)\vp(i)$
    \item $\vp'(i)(mn) = m(\vp'(i)n)$
    \item $\vp(i)(mn) = (\vp(i)m)n$
    \item $(\vp'(i)m)n = m(\vp(i)n)$
    \item $\vp(i)f(j,k) + f(i,jk) = \vp'(k)f(i,j) + f(ij,k)$
\end{enumerate} are satisfied for all $m,n\in A$ and $i,j,k\in B$.
\end{defn}

\begin{defn}
Let $A$ and $B$ be commutative algebras. A \textit{factor system} of $A$ by $B$ is a tuple of maps $(\vp,f)$ where $\vp:B\xrightarrow{} \mathscr{L}(A)$ is linear, $f:B\times B\xrightarrow{} A$ is bilinear, and \begin{enumerate}
    \item $f(i,j) = f(j,i)$
    \item $\vp(i)\vp(j) = \vp(ij) + \ad_{f(i,j)}$
    \item $\vp(i)(mn) = m(\vp(i)n) = (\vp(i)m)n$
    \item $\vp(i)f(j,k) + f(i,jk) = \vp(k)f(i,j) + f(ij,k)$
\end{enumerate} are satisfied for all $m,n\in A$ and $i,j,k\in B$.
\end{defn}

\newpage
\begin{defn}
Let $A$ and $B$ be diassociative algebras. A factor system $(\vp\dd,\vp\vv,\vp\dd',\vp\vv',f\dd,f\vv)$ of $A$ by $B$ \textit{belongs} to an extension $0\xrightarrow{} A\xrightarrow{\sigma} \LL\xrightarrow{\pi} B\xrightarrow{} 0$ and section $T$ if
\begin{multicols}{2}
\begin{enumerate}
    \item[] $\vp\dd = P\dd T$,
    \item[] $\vp\dd' = P\dd'T$,
    \item[] $\sigma(f\dd(i,j)) = T(i)\dashv T(j) - T(i\dashv j)$,
    \item[] $\vp\vv = P\vv T$,
    \item[] $\vp\vv' = P\vv'T$,
    \item[] $\sigma(f\vv(i,j)) = T(i)\vdash T(j) - T(i\vdash j)$
\end{enumerate}
\end{multicols}
\noindent for all $i,j\in B$, where \begin{multicols}{2} \begin{enumerate}
    \item[] $P\dd(x)m = \sigma\inv(x\dashv\sigma(m))$,
    \item[] $P\dd'(x)m = \sigma\inv(\sigma(m)\dashv x)$,
    \item[] $P\vv(x)m = \sigma\inv(x\vdash\sigma(m))$,
    \item[] $P\vv'(x)m = \sigma\inv(\sigma(m)\vdash x)$
\end{enumerate} \end{multicols} \noindent for any $x\in L$, $m\in A$.
\end{defn}

\begin{thm} Let $A$ and $B$ be diassociative algebras. Given an extension $0\xrightarrow{} A\xrightarrow{} L\xrightarrow{\pi} B\xrightarrow{} 0$ of $A$ by $B$ and section $T:B\xrightarrow{} L$, there exists a unique factor system $(\vp\dd,\vp\vv,\vp\dd',\vp\vv',f\dd,f\vv)$ of $A$ by $B$ belonging to the extension and $T$.
\end{thm}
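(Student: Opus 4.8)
The plan is to imitate the proof of Theorem \ref{thm 1} essentially verbatim, now carrying two products instead of one. First I would set $\vp\dd = P\dd T$, $\vp\vv = P\vv T$, $\vp\dd' = P\dd'T$, and $\vp\vv' = P\vv'T$; these are linear because each $P\dd, P\vv, P\dd', P\vv'$ is linear and $T$ is linear. To construct $f\dd$ and $f\vv$, note that since $\pi$ is a homomorphism for both products and $\pi T = I_B$, the elements $T(i)\dashv T(j) - T(i\dashv j)$ and $T(i)\vdash T(j) - T(i\vdash j)$ both lie in $\ker\pi$. By exactness $\ker\pi = \ima\sigma$, so there are unique elements of $A$, which I will name $f\dd(i,j)$ and $f\vv(i,j)$, whose images under $\sigma$ are these two differences; uniqueness here is where injectivity of $\sigma$ enters. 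Bilinearity of $f\dd$ and $f\vv$ is then checked exactly as in the Leibniz case, by applying $\sigma$, using bilinearity of $\dashv$ and $\vdash$ together with linearity of $T$, and applying $\sigma\inv$.

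The bulk of the work is verifying the five families of seven identities. The organizing observation is that each family corresponds to one defining relation of diassociative algebras: family 1 to D1, family 2 to D2, family 3 to D3, family 4 to associativity of $\dashv$, and family 5 to associativity of $\vdash$. Within a family, the seven axioms arise by substituting the three slots of the corresponding relation, as it holds in $L$, with elements drawn from $\sigma(A)$ and $T(B)$; the $2^3 = 8$ possible assignments, minus the all-$\sigma(A)$ case (which is merely the relation already holding in $A$), give exactly the seven axioms. Each verification would then follow the same three-move pattern: apply $\sigma$ to pass into $L$; invoke the relevant relation in $L$, rewriting any product of two lifts $T(i)\dashv T(j)$ as $\sigma(f\dd(i,j)) + T(i\dashv j)$ and $T(i)\vdash T(j)$ as $\sigma(f\vv(i,j)) + T(i\vdash j)$ (using that $\sigma$ is a homomorphism for both products); and finally apply $\sigma\inv$, reading off the right-hand side through the definitions $\vp\dd = P\dd T$, and so on.

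As a representative instance, axiom 4(e), namely $\vp\dd(i)(\vp\dd(j)p) = \vp\dd(i\dashv j)p + f\dd(i,j)\dashv p$, comes from the slot assignment $(T(i), T(j), \sigma(p))$: the left-hand side is $\sigma\inv\big(T(i)\dashv(T(j)\dashv\sigma(p))\big)$, and applying associativity of $\dashv$ in $L$ followed by the cocycle rewrite $T(i)\dashv T(j) = \sigma(f\dd(i,j)) + T(i\dashv j)$ produces the right-hand side after $\sigma\inv$. I expect every other identity to fall out by the identical mechanism with the appropriate relation and slot assignment, mirroring the seven displayed computations in Theorem \ref{thm 1}. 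The hard part will be purely bookkeeping: thirty-five identities must each be matched to the correct defining relation and the correct slot assignment, with the $f\dd$- and $f\vv$-terms introduced at exactly the right places; no step should require an idea beyond those already used in the Leibniz proof.

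Finally, uniqueness is immediate from the definition of belonging. The conditions $\vp\dd = P\dd T$, $\vp\vv = P\vv T$, $\vp\dd' = P\dd'T$, and $\vp\vv' = P\vv'T$ pin down the four multiplication maps outright, while the conditions $\sigma(f\dd(i,j)) = T(i)\dashv T(j) - T(i\dashv j)$ and $\sigma(f\vv(i,j)) = T(i)\vdash T(j) - T(i\vdash j)$, together with injectivity of $\sigma$, determine $f\dd$ and $f\vv$ uniquely. Hence the belonging factor system exists and is unique.
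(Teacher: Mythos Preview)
Your proposal is correct and follows exactly the paper's approach: define the $\vp$ maps as $P\dd T$, etc., define $f\dd$ and $f\vv$ via exactness and injectivity of $\sigma$, and then verify the thirty-five identities by transporting the five diassociative relations in $L$ through $\sigma$ and $\sigma\inv$. In fact your write-up is considerably more detailed than the paper's, which simply declares the verification ``straightforward''; your organizing remark that the seven axioms in each family arise from the $2^3-1$ nontrivial slot assignments is a helpful bookkeeping device that the paper does not make explicit.
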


\begin{proof}
Set $\vp\dd = P\dd T$, $\vp\vv = P\vv T$, $\vp\dd' = P\dd' T$, and $\vp\vv' = P\vv' T$. Next, it is easily checked that $T(i)\dashv T(j) - T(i\dashv j)$ and $T(i)\vdash T(j) - T(i\vdash j)$ are in the kernel of $\pi$ for all $i,j\in B$. We define $f\dd$ and $f\vv$ by $\sigma(f\dd(i,j)) = T(i)\dashv T(j) - T(i\dashv j)$ and $\sigma(f\vv(i,j)) = T(i)\vdash T(j) - T(i\vdash j)$ which are clearly bilinear maps $B\times B\xrightarrow{} A$. It is straightforward to verify that $(\vp\dd,\vp\vv,\vp\dd',\vp\vv',f\dd,f\vv)$ is a factor system.
\end{proof}

\begin{thm}\label{di thm 2}
Let $(\vp\dd,\vp\vv,\vp\dd',\vp\vv',f\dd,f\vv)$ be a factor system of $A$ by $B$ and let $L$ denote the vector space $A\oplus B$ with multiplications \begin{enumerate}
    \item[] $(m,i)\vdash (n,j) = (m\vdash n + \vp\vv(i)n + \vp\vv'(j)m + f\vv(i,j) ~,~ i\vdash j)$,
    \item[] $(m,i)\dashv (n,j) = (m\dashv n + \vp\dd(i)n + \vp\dd'(j)m + f\dd(i,j)~ , ~i\dashv j)$
\end{enumerate} for $m,n\in A$ and $i,j\in B$. Let $\sigma:A\xrightarrow{} L$ by $\sigma(m) = (m,0)$, $\pi:L\xrightarrow{} B$ by $\pi(m,i) = i$, and $T:B\xrightarrow{} L$ by $T(i) = (0,i)$. Then \begin{enumerate}
    \item $L$ is a diassociative algebra.
    \item $0\xrightarrow{} A\xrightarrow{\sigma} L\xrightarrow{\pi} B\xrightarrow{} 0$ is an extension.
    \item $\pi T = I = I_B$.
    \item The factor system $(\vp\dd,\vp\vv,\vp\dd',\vp\vv',f\dd,f\vv)$ belongs to the extension and $T$.
\end{enumerate}
\end{thm}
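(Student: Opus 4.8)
The plan is to imitate the proof of Theorem~\ref{thm 2} with the two products handled in parallel, so that parts 2, 3, and 4 are essentially immediate and all the content sits in part 1. For part 2, I would check that $\sigma$ respects each product, e.g. $\sigma(m\dashv n) = (m\dashv n,0) = (m,0)\dashv(n,0) = \sigma(m)\dashv\sigma(n)$ and likewise for $\vdash$, while $\pi$ sends each product to the corresponding product on $B$ since the second coordinate of $(m,i)\dashv(n,j)$ is $i\dashv j$; exactness is then trivial because $\ker\pi = A\oplus 0 = \ima\sigma$, with $\sigma$ injective and $\pi$ surjective. Part 3 is the single line $\pi T(i) = \pi(0,i) = i$. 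For part 4, the computation $P\dd T(i)m = \sigma\inv\big((0,i)\dashv(m,0)\big) = \sigma\inv\big(\vp\dd(i)m + f\dd(i,0),0\big) = \vp\dd(i)m$, using $f\dd(i,0)=0$ by bilinearity, gives $P\dd T = \vp\dd$, and the analogous one-line computations give $P\vv T = \vp\vv$, $P\dd'T = \vp\dd'$, and $P\vv'T = \vp\vv'$; the two cocycle conditions then read $\sigma(f\dd(i,j)) = (f\dd(i,j),0) = (0,i)\dashv(0,j) - (0,i\dashv j) = T(i)\dashv T(j) - T(i\dashv j)$ and symmetrically for $f\vv$.

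The real work is part 1, where I must verify the five defining identities of a diassociative algebra on $L$: the associativity of $\dashv$, the associativity of $\vdash$, and D1, D2, D3. Both products are visibly bilinear, so it suffices to test each identity on a single triple $(m,i),(n,j),(p,k)$. The organizing observation is that the five numbered sets of factor-system axioms correspond one-to-one with these five diassociative identities, and that within a set the seven sub-identities (a)--(g) are indexed exactly by the seven types of term that appear in the $A$-component of the relevant triple product, sorted by how many of $i,j,k$ act through a $\vp$-operator and how $f$ enters. I expect the main obstacle to be purely the bookkeeping: for each identity I expand both sides using the two multiplication formulas, note that the $B$-components agree by the matching diassociative identity on $B$, and then cancel the $A$-components term-by-term, invoking each of the seven sub-identities of the corresponding set precisely once.

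To see the pattern concretely, take associativity of $\dashv$ and expand the $A$-components of $\big((m,i)\dashv(n,j)\big)\dashv(p,k)$ and $(m,i)\dashv\big((n,j)\dashv(p,k)\big)$. The pure products $(m\dashv n)\dashv p$ and $m\dashv(n\dashv p)$ cancel by associativity of $\dashv$ on $A$; the single-operator terms are equated by sub-identities (a), (b), (d); the two-operator terms, together with their correction terms involving $f\dd$ and operators indexed by $\dashv$-products, by (c), (e), (f); and the terms in which an operator acts on a value of $f\dd$ by (g). The associativity of $\vdash$ is handled identically through set 5, while D1, D2, D3 are handled through sets 1, 2, 3, the only difference being that the mixed identities recruit all four operators $\vp\dd,\vp\vv,\vp\dd',\vp\vv'$ and both $f\dd$ and $f\vv$. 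As in the Leibniz case, whose footnote records that the axioms were read off from exactly this computation, no genuine difficulty arises beyond careful term-tracking, since each sub-identity was chosen precisely to force its corresponding cancellation. This establishes that $L$ is a diassociative algebra and completes the proof.
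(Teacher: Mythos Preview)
Your proposal is correct and follows the same approach as the paper, which simply notes that the five diassociative identities on $L$ are verified via the axioms of factor systems and the diassociative structures on $A$ and $B$, leaving parts 2--4 as routine. Your account is in fact more detailed than the paper's own proof, since you spell out how each of the seven sub-identities in a given set cancels a corresponding term in the expansion, whereas the paper leaves this implicit.
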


\begin{proof}
It takes five direct computations to verify that the vector space $L=A\oplus B$, with multiplications defined in the statement of the theorem, is a diassociative algebra. In particular, one must check D1, D2, D3, and the associativity of both $\dashv$ and $\vdash$. Said computations follow via the axioms of factor systems and the diassociative structures on $A$ and $B$.
\end{proof}

We now define a notion of equivalence for factor systems so that equivalence classes of factor systems will correspond to those of extensions. The subsequent corollaries hold by the same logic as their Leibniz analogues.

\begin{defn}
Two factor systems $(\vp\dd,\vp\vv,\vp\dd',\vp\vv',f\dd,f\vv)$ and $(\psi\dd,\psi\vv,\psi\dd',\psi\vv',g\dd,g\vv)$ of $A$ by $B$ are \textit{equivalent} if there exists a linear transformation $E:B\xrightarrow{} A$ such that
\begin{enumerate}
    \item $\psi\dd(i) = \vp\dd(i) + \ad\dd^l(E(i))$,
    \item $\psi\dd'(i) = \vp\dd'(i) + \ad\dd^r(E(i))$,
    \item $\psi\vv(i) = \vp\vv(i) + \ad\vv^l(E(i))$,
    \item $\psi\vv'(i) = \vp\vv'(i) + \ad\vv^r(E(i))$,
    \item $g\dd(i,j) = f\dd(i,j) + \vp\dd'(j)E(i) + \vp\dd(i)E(j) + E(i)\dashv E(j) - E(i\dashv j)$,
    \item $g\vv(i,j) = f\vv(i,j) + \vp\vv'(j)E(i) + \vp\vv(i)E(j) + E(i)\vdash E(j) - E(i\vdash j)$
\end{enumerate}
for all $i,j\in B$  where $\ad\dd^l(E(i))m = E(i)\dashv m$, $\ad\vv^l(E(i))m = E(i)\vdash m$, $\ad\dd^r(E(i))m = m\dashv E(i)$, and $\ad\vv^r(E(i))m = m\vdash E(i)$ for all $m\in A$.
\end{defn}

\begin{thm}\label{di thm 3}
If the factor system $(\vp\dd,\vp\vv,\vp\dd', \vp\vv',f\dd,f\vv)$ belongs to the extension $0\xrightarrow{} A\xrightarrow{\sigma_1} \LL_1\xrightarrow{\pi_1} B\rightarrow{} 0$ and $T_1$ and the factor system $(\psi\dd,\psi\vv,\psi\dd',\psi\vv',g\dd,g\vv)$ belongs to the extension $0\xrightarrow{} A\xrightarrow{\sigma_2} \LL_2\xrightarrow{\pi_2} B\rightarrow{} 0$ and $T_2$, then the factor systems are equivalent if and only if the extensions are equivalent.
\end{thm}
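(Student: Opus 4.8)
The plan is to replicate the two-directional argument of the Leibniz proof (Theorem \ref{thm 3}), the single essential novelty being that every multiplicative computation must now be carried out twice, once for $\dashv$ and once for $\vdash$.

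($\implies$) Assuming the factor systems are equivalent via a linear map $E:B\xrightarrow{} A$, I would define $\tau:\LL_1\xrightarrow{}\LL_2$ on the unique representation $T_1(i)+\sigma_1(m)$ of each element of $\LL_1$ by $\tau(T_1(i)+\sigma_1(m)) = T_2(i)+\sigma_2(-E(i)+m)$, exactly as in the Leibniz case. Linearity is clear, and the identities $\pi_2\tau=\pi_1$ and $\tau\sigma_1=\sigma_2$ follow from the same short computations as before. The genuinely new work is to show that $\tau$ respects both products. For each product $\star$ (taken to be $\dashv$, then $\vdash$), I would expand $\tau(a\star b)$ and $\tau(a)\star\tau(b)$ for $a=T_1(i)+\sigma_1(m)$ and $b=T_1(j)+\sigma_1(n)$, using the belonging axioms to rewrite $T_r(i)\star T_r(j)$ as $T_r(i\star j)+\sigma_r(f_\star(i,j))$, observe that the unique $T_2(i\star j)$ term matches on both sides, and then reduce the two $\sigma_2$-arguments to a common expression. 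The $\dashv$-reduction closes using equivalence axioms 1, 2, and 5; the $\vdash$-reduction closes using axioms 3, 4, and 6.

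($\impliedby$) Assuming instead that the extensions are equivalent via an isomorphism $\tau:\LL_1\xrightarrow{}\LL_2$, the exactness argument of Theorem \ref{thm 3} applies verbatim: from $\pi_1\tau\inv T_2(i)=\pi_1 T_1(i)$ one obtains $\tau\inv T_2(i)-T_1(i)\in\ker\pi_1=\ima\sigma_1$, hence an element $n_i\in A$ with $\tau\inv T_2(i)=T_1(i)+\sigma_1(n_i)$, and I would set $E(i)=n_i$. Verifying the six equivalence axioms then amounts to transporting computations through $\tau\inv$. Axioms 1--4 come from unwinding $\vp\dd=P\dd T$, $\vp\dd'=P\dd'T$, $\vp\vv=P\vv T$, and $\vp\vv'=P\vv'T$ against the belonging relations, each paralleling the single $\vp$/$\vp'$ computation of the Leibniz case. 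Axioms 5 and 6, governing $g\dd$ and $g\vv$, follow by expanding $\sigma_2\inv(T_2(i)\star T_2(j)-T_2(i\star j))$ and pushing the result through $\tau\inv$, once for each product.

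The main obstacle is organizational rather than conceptual. The presence of two products doubles every verification, and the delicate bookkeeping lies in the forward direction, where one must correctly track each $\sigma_2$-term produced by expanding the two products and confirm that it is absorbed by the appropriate equivalence-axiom substitution. No new identity beyond those already encoded in the diassociative belonging and equivalence definitions is required, so each half of each computation is a faithful transcription of the corresponding Leibniz step with $\dashv$ or $\vdash$ inserted throughout.
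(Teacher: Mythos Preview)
Your proposal is correct and follows essentially the same approach as the paper: define $\tau$ exactly as in the Leibniz case and verify multiplicativity for each product separately, then in the reverse direction set $E(i)=n_i$ where $\tau^{-1}T_2(i)=T_1(i)+\sigma_1(n_i)$ and check the six equivalence axioms. Your sketch is in fact more detailed than the paper's own proof, which simply says to repeat the Leibniz argument with the two products.
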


\begin{proof}
In the forward direction, one defines $\tau$ in the same way as the Leibniz case and computes $\tau(a\dashv b) = \tau(a)\dashv \tau(b)$ and $\tau(a\vdash b) = \tau(a)\vdash \tau(b)$ via the axioms of equivalence for diassociative factor systems. In the other direction, one defines $E(i) =n_i$ where $\tau\inv T_2(i) = T_1(i) + \sigma_1(n_i)$. There are six axioms to check when verifying that $E$ is an equivalence of factor systems. Otherwise, the theorem follows by similar logic.
\end{proof}

\begin{cor}\label{di diff Ts}
    Given an extension $0\xrightarrow{} A\xrightarrow{\sigma} \LL\xrightarrow{\pi} B\xrightarrow{} 0$, let $T_1:B\xrightarrow{} \LL$ and $T_2:B\xrightarrow{} \LL$ be linear maps such that $\pi T_1 = I_B = \pi T_2$. Suppose also that $(\vp\dd,\vp\vv,\vp\dd', \vp\vv',f\dd,f\vv)$ is a factor system of $A$ by $B$ which belongs to the extension and $T_1$, and $(\psi\dd,\psi\vv,\psi\dd',\psi\vv',g\dd,g\vv)$ is a factor system of $A$ by $B$ which belongs to the extension and $T_2$. Then $(\vp\dd,\vp\vv,\vp\dd', \vp\vv',f\dd,f\vv)$ is equivalent to $(\psi\dd,\psi\vv,\psi\dd',\psi\vv',g\dd,g\vv)$.
\end{cor}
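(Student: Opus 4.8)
The plan is to deduce this immediately from Theorem \ref{di thm 3}, exactly as the Leibniz Corollary \ref{diff Ts} was deduced from Theorem \ref{thm 3}. The key observation is that both factor systems in the hypothesis belong to the \emph{same} extension $0\xrightarrow{} A\xrightarrow{\sigma} \LL\xrightarrow{\pi} B\xrightarrow{} 0$; they differ only in the choice of section, $T_1$ versus $T_2$. So the task reduces to recognizing that this single extension is equivalent to itself.

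First I would record the triviality that any extension is equivalent to itself: the identity map $\tau = I_{\LL}:\LL\xrightarrow{}\LL$ is an isomorphism satisfying $\tau\sigma = \sigma$ and $\pi\tau = \pi$, hence an equivalence of the extension with itself. With this in hand, Theorem \ref{di thm 3} applies directly. Since $(\vp\dd,\vp\vv,\vp\dd',\vp\vv',f\dd,f\vv)$ belongs to the extension and $T_1$, while $(\psi\dd,\psi\vv,\psi\dd',\psi\vv',g\dd,g\vv)$ belongs to the same extension and $T_2$, and the two extensions (being identical) are equivalent, the ``if'' direction of Theorem \ref{di thm 3} forces the two factor systems to be equivalent. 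This yields the conclusion.

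There is essentially no obstacle here, since the substantive content -- constructing the equivalence $E:B\xrightarrow{} A$ of factor systems from an equivalence $\tau$ of extensions and verifying all six diassociative equivalence axioms -- was already carried out in the proof of Theorem \ref{di thm 3}. For the reader who prefers an explicit witness, I would note that running the reverse direction of that proof with $\tau = I_{\LL}$ (so $\tau\inv = I_{\LL}$ and $\sigma_1 = \sigma_2 = \sigma$) produces $E(i) = n_i$, where $T_2(i) = T_1(i) + \sigma(n_i)$; this $n_i$ exists because $\pi T_1 = I_B = \pi T_2$ forces $T_2(i) - T_1(i)\in \ker\pi = \ima\sigma$, and this $E$ is precisely the required equivalence of $(\vp\dd,\vp\vv,\vp\dd',\vp\vv',f\dd,f\vv)$ with $(\psi\dd,\psi\vv,\psi\dd',\psi\vv',g\dd,g\vv)$.
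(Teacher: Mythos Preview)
Your proposal is correct and follows precisely the paper's approach: the paper states that the diassociative corollaries ``hold by the same logic as their Leibniz analogues,'' and for the Leibniz version the argument is exactly that any extension is equivalent to itself, whence Theorem~\ref{di thm 3} forces the two factor systems to be equivalent. Your additional explicit description of $E$ via $T_2(i)=T_1(i)+\sigma(n_i)$ is a helpful elaboration but not a departure from the method.
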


\begin{cor}\label{di equiv relation}
Equivalence of factor systems is an equivalence relation.
\end{cor}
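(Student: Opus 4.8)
The plan is to obtain this formally from the correspondence of Theorem~\ref{di thm 3} together with the fact, recorded in the Preliminaries, that equivalence of extensions is itself an equivalence relation; this is exactly the reasoning behind the Leibniz Corollary~\ref{equiv relation}. The point is that no new diassociative identities need to be checked: all three of reflexivity, symmetry, and transitivity are transported across the biconditional ``factor systems equivalent $\iff$ extensions equivalent.''

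First I would invoke Theorem~\ref{di thm 2} to realize every factor system of $A$ by $B$ as belonging to some extension and section, so that Theorem~\ref{di thm 3} applies to any pair under consideration. Reflexivity is then immediate: any extension is equivalent to itself, so the associated factor system is equivalent to itself (equivalently, the linear map $E=0$ satisfies the six equivalence axioms directly). For symmetry, given equivalent factor systems I would apply Theorem~\ref{di thm 3} to pass to equivalent extensions, use symmetry of extension equivalence to reverse them, and apply Theorem~\ref{di thm 3} a second time---with the roles of the two extensions interchanged---to recover equivalence of the factor systems in the opposite order. Transitivity follows the same pattern, chaining two applications of Theorem~\ref{di thm 3} through the transitivity of extension equivalence.

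The main obstacle would surface only if one attempted to prove the corollary \emph{directly} at the level of factor systems rather than through the correspondence. The difficulty is that the $g\dd$ and $g\vv$ axioms of equivalence are nonlinear in $E$: each carries a quadratic term, $E(i)\dashv E(j)$ and $E(i)\vdash E(j)$ respectively. Consequently symmetry is not obviously witnessed by $-E$, nor transitivity by $E_1+E_2$, since cross terms intervene; this is the familiar reason that nonabelian factor systems do not form a linear structure. Routing everything through Theorem~\ref{di thm 3} sidesteps these computations entirely, and the only bookkeeping required is the observation, supplied by Theorem~\ref{di thm 2}, that the relevant extensions exist so that the biconditional may be invoked in both directions.
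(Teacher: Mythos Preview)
Your proposal is correct and follows essentially the same route as the paper. The paper's proof amounts to the single line ``the subsequent corollaries hold by the same logic as their Leibniz analogues,'' and that Leibniz argument is precisely ``follows from Theorem~\ref{thm 3} and the fact that equivalence of extensions is an equivalence relation''; your version makes explicit the use of Theorem~\ref{di thm 2} to place each factor system on an extension so that Theorem~\ref{di thm 3} applies, which the paper leaves implicit.
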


\begin{thm}\label{di any lin trans}
If $(\vp\dd,\vp\vv,\vp\dd', \vp\vv',f\dd,f\vv)$ is a factor system of $A$ by $B$ and $E$ is a linear transformation from $B$ to $A$, then there exists a factor system $(\psi\dd,\psi\vv,\psi\dd',\psi\vv',g\dd,g\vv)$ such that $E$ is an equivalence between them. Furthermore, if $E$ is an equivalence, then $(\psi\dd,\psi\vv,\psi\dd',\psi\vv',g\dd,g\vv)$ is unique.
\end{thm}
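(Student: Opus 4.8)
The plan is to mimic the Leibniz proof of Theorem \ref{any lin trans}, which was purely constructive. First I would \emph{define} the candidate $(\psi\dd,\psi\vv,\psi\dd',\psi\vv',g\dd,g\vv)$ to be exactly the data forced by the definition of equivalence: set
\[
\psi\dd(i) = \vp\dd(i) + \ad\dd^l(E(i)),\quad \psi\dd'(i) = \vp\dd'(i) + \ad\dd^r(E(i)),
\]
\[
\psi\vv(i) = \vp\vv(i) + \ad\vv^l(E(i)),\quad \psi\vv'(i) = \vp\vv'(i) + \ad\vv^r(E(i)),
\]
and, taking $g\dd$ as representative,
\[
g\dd(i,j) = f\dd(i,j) + \vp\dd'(j)E(i) + \vp\dd(i)E(j) + E(i)\dashv E(j) - E(i\dashv j),
\]
with $g\vv$ defined by the analogous formula using $\vdash$. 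Linearity of the four $\psi$-maps is immediate, each being a sum of linear maps, and bilinearity of $g\dd,g\vv$ follows from bilinearity of $f\dd,f\vv$, linearity of $E$, and bilinearity of the two products on $A$ and $B$.

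The substance of the proof is verifying that this tuple satisfies all five sets of seven diassociative factor-system axioms. For each of the thirty-five identities I would substitute the definitions above into both sides, expand, and sort the resulting terms into two groups: those built solely from the original $\vp$'s and $f$'s, which cancel against one another by the corresponding axiom of $(\vp\dd,\vp\vv,\vp\dd',\vp\vv',f\dd,f\vv)$, and the \emph{adjustment terms} carrying at least one factor of $E$. As in the Leibniz computation, the pure $\vp/f$ terms match axiom-for-axiom while leftover pieces such as $\pm E(i\dashv j)$ or $\pm E(i\vdash j)$ cancel in pairs; the remaining $E$-adjustment terms are then reconciled by applying the identities D1--D3 and the associativity of $\dashv$ and $\vdash$ on $A$, exactly as those same identities reconcile the $\vp$-terms.

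The main obstacle is organizational rather than conceptual: several identities---especially those in the D1-, D2-, and D3-resembling families---mix the two products, so one must invoke the correct diassociative identity on $A$ at each step, ensuring, for instance, that a term $E(i)\dashv(\cdots)$ produced on one side is matched by a term $E(i)\vdash(\cdots)$ on the other. This is precisely the role played at the level of the $\vp$-maps by the cross-type axioms, so the verification reduces to checking that the $E$-adjustments obey the same structural rules the $\vp$-terms already satisfy. Once all thirty-five identities are confirmed, $(\psi\dd,\psi\vv,\psi\dd',\psi\vv',g\dd,g\vv)$ is a factor system and, by construction, $E$ is an equivalence of it with $(\vp\dd,\vp\vv,\vp\dd',\vp\vv',f\dd,f\vv)$. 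For uniqueness, observe that the six defining formulas express the new factor system explicitly in terms of the given data and the fixed $E$; hence any factor system for which $E$ is an equivalence must coincide with the one constructed, just as in the Leibniz case.
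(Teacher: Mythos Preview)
Your proposal is correct and follows essentially the same approach as the paper: define $(\psi\dd,\psi\vv,\psi\dd',\psi\vv',g\dd,g\vv)$ by the formulas forced by the definition of equivalence, verify the thirty-five factor-system identities using the corresponding identities of $(\vp\dd,\vp\vv,\vp\dd',\vp\vv',f\dd,f\vv)$ together with the diassociative axioms on $A$, and observe uniqueness from the explicit formulas. The paper's proof is in fact even more terse than yours, merely asserting that the verification is straightforward.
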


\begin{proof}
Define \begin{enumerate}[topsep=0pt, itemsep=1pt]
    \item[i.] $\psi\dd(i) = \vp\dd(i) + \ad\dd^l(E(i))$,
    \item[ii.] $\psi\vv(i) = \vp\vv(i) + \ad\vv^l(E(i))$,
    \item[iii.] $\psi\dd'(i) = \vp\dd'(i) + \ad\dd^r(E(i))$,
    \item[iv.] $\psi\vv'(i) = \vp\vv'(i) + \ad\vv^r(E(i))$,
    \item[v.] $g\dd(i,j) = f\dd(i,j) + \vp\dd'(j)E(i) + \vp\dd(i)E(j) + E(i)\dashv E(j) - E(i\dashv j)$,
    \item[vi.] $g\vv(i,j) = f\vv(i,j) + \vp\vv'(j)E(i) + \vp\vv(i)E(j) + E(i)\vdash E(j) - E(i\vdash j)$
\end{enumerate} for all $i,j\in B$. It is straightforward to verify that $\psi\dd$, $\psi\vv$, $\psi\dd'$, and $\psi\vv'$ are linear transformations and that $g\dd$ and $g\vv$ are bilinear forms. One checks that $(\psi\dd,\psi\vv,\psi\dd',\psi\vv',g\dd,g\vv)$ is a factor system via the identities of $(\vp\dd,\vp\vv,\vp\dd', \vp\vv',f\dd,f\vv)$ and the axioms of diassociative algebras. By construction, the two factor systems are equivalent with $E$ as their corresponding equivalence. It is straightforward to verify uniqueness.
\end{proof}

\begin{thm}\label{di split cond}
Let $(\vp\dd,\vp\vv,\vp\dd',\vp\vv',f\dd,f\vv)$ be a factor system of $A$ by $B$. The following are equivalent:
\begin{enumerate}[topsep=0pt]
    \item[a.] $(\vp\dd,\vp\vv,\vp\dd',\vp\vv',f\dd,f\vv)$ splits;
    \item[b.] $(\vp\dd,\vp\vv,\vp\dd',\vp\vv',f\dd,f\vv)$ is equivalent to some factor system $(\psi\dd,\psi\vv,\psi\dd',\psi\vv',g\dd,g\vv)$ such that $g\dd = 0$ and $g\vv = 0$;
    \item[c.] there exists a linear transformation $E:B\xrightarrow{} A$ such that \begin{enumerate}[itemsep=0pt, topsep=0pt]
        \item[] $f\dd(i,j) = -\vp\dd'(j)E(i) - \vp\dd(i)E(j) - E(i)\dashv E(j) + E(i\dashv j)$,
        \item[] $f\vv(i,j) = -\vp\vv'(j)E(i) - \vp\vv(i)E(j) - E(i)\vdash E(j) + E(i\vdash j)$.
    \end{enumerate}
\end{enumerate}
\end{thm}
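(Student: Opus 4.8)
The plan is to prove the three conditions equivalent by a cyclic argument (a.$\implies$b.$\implies$c.$\implies$a.), following the Leibniz case in Theorem \ref{conditions for split} but carrying both products $\dashv$ and $\vdash$ in parallel throughout. The one conceptual point to keep in mind is that a homomorphism of diassociative algebras preserves \emph{both} products simultaneously, so the two factor maps $g\dd$ and $g\vv$ will vanish together; everything else is bookkeeping atop the diassociative analogues already established.

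For (a.$\implies$b.), I would start from the hypothesis that $(\vp\dd,\vp\vv,\vp\dd',\vp\vv',f\dd,f\vv)$ belongs to a split extension $0\to A\to L\to B\to 0$ with associated homomorphism $T:B\to L$ satisfying $\pi T = I_B$. By Corollary \ref{di diff Ts}, the factor system $(\psi\dd,\psi\vv,\psi\dd',\psi\vv',g\dd,g\vv)$ that belongs to this extension and $T$ is equivalent to the given one. Since $T$ is a diassociative homomorphism, one has $T(i)\dashv T(j) = T(i\dashv j)$ and $T(i)\vdash T(j) = T(i\vdash j)$, so the belonging axioms force $\sigma(g\dd(i,j)) = 0$ and $\sigma(g\vv(i,j)) = 0$; injectivity of $\sigma$ then yields $g\dd = g\vv = 0$.

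For (b.$\implies$c.), let $E$ be an equivalence of $(\vp\dd,\vp\vv,\vp\dd',\vp\vv',f\dd,f\vv)$ with a factor system having $g\dd = g\vv = 0$. Setting the left-hand sides of equivalence axioms 5 and 6 equal to zero and solving for $f\dd(i,j)$ and $f\vv(i,j)$ produces exactly the two identities in (c), so this step is immediate. For (c.$\implies$a.), given $E$ as in (c), I would invoke Theorem \ref{di any lin trans} to obtain a factor system $(\psi\dd,\psi\vv,\psi\dd',\psi\vv',g\dd,g\vv)$ equivalent to the original via $E$ and belonging to some extension and section $T$. The hypotheses in (c) are precisely the assertions that $g\dd = 0$ and $g\vv = 0$, so the belonging axioms give $T(i)\dashv T(j) - T(i\dashv j) = \sigma(g\dd(i,j)) = 0$ and likewise for $\vdash$; thus $T$ preserves both products and is a homomorphism, and it is injective because $\pi T = I_B$. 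Hence the extension splits, and so does the original factor system.

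I do not expect any genuine obstacle here: the argument is structurally identical to the Leibniz proof and relies only on the already-proven diassociative analogues (Corollary \ref{di diff Ts} and Theorem \ref{di any lin trans}). The only thing requiring care is to track the $\dashv$- and $\vdash$-data in tandem and to cite the diassociative versions of the supporting results rather than their Leibniz counterparts.
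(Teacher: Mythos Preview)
Your proposal is correct and follows essentially the same cyclic argument as the paper's proof, invoking Corollary~\ref{di diff Ts} for (a.$\implies$b.) and Theorem~\ref{di any lin trans} for (c.$\implies$a.) just as the paper does. The only cosmetic difference is that you spell out the injectivity of $\sigma$ and the homomorphism property of $T$ a bit more explicitly than the paper, but the logic is identical.
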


\begin{proof}
(a.$\implies$b.) We know $(\vp\dd,\vp\vv,\vp\dd',\vp\vv',f\dd,f\vv)$ belongs to a split extension $0\xrightarrow{} A\xrightarrow{\sigma} L\xrightarrow{\pi} B\xrightarrow{}0$. By definition, there is an associated homomorphism $T:B\xrightarrow{} L$ such that $\pi T = I_B$. Hence there exists a factor system $(\psi\dd,\psi\vv,\psi\dd',\psi\vv',g\dd,g\vv)$ belonging to the extension and $T$ which is equivalent to $(\vp\dd,\vp\vv,\vp\dd',\vp\vv',f\dd,f\vv)$ by Corollary \ref{di diff Ts}. Since $T$ is a homomorphism, we have $g\dd=g\vv=0$.

(b.$\implies$c.) Let $E:B\xrightarrow{} A$ be an equivalence of factor systems $(\vp\dd,\vp\vv,\vp\dd',\vp\vv',f\dd,f\vv)$ and $(\psi\dd,\psi\vv,\psi\dd',\psi\vv',g\dd,g\vv)$ where $g\dd=g\vv=0$. Then $0= g\dd(i,j) = f\dd(i,j) + \vp\dd'(j)E(i) + \vp\dd(i)E(j) + E(i)\dashv E(j) - E(i\dashv j)$ and $0= g\vv(i,j) = f\vv(i,j) + \vp\vv'(j)E(i) + \vp\vv(i)E(j) + E(i)\vdash E(j) - E(i\vdash j)$ for all $i,j\in B$ by the axioms of equivalence, which implies the desired equalities.

(c.$\implies$a.) Let $E$ be as in c. By Theorem \ref{di any lin trans}, $E$ is an equivalence of $(\vp\dd,\vp\vv,\vp\dd',\vp\vv',f\dd,f\vv)$ with another factor system $(\psi\dd,\psi\vv,\psi\dd',\psi\vv',g\dd,g\vv)$ which belongs to an extension $0\xrightarrow{} A\xrightarrow{\sigma} L\xrightarrow{\pi} B\xrightarrow{}0$ and $T:B\xrightarrow{} L$. One has $g\dd(i,j) = f\dd(i,j) + \vp\dd'(j)E(i) + \vp\dd(i)E(j) + E(i)\dashv E(j) - E(i\dashv j) = 0$ and $g\vv(i,j) = f\vv(i,j) + \vp\vv'(j)E(i) + \vp\vv(i)E(j) + E(i)\vdash E(j) - E(i\vdash j) = 0$ by assumption. Then, since $\sigma(g(i,j)) = 0$ for all $i,j\in B$, the axioms of belonging imply that $T$ is a homomorphism. Also, $T$ is injective since $\pi T= I_B$. Hence the extension splits and, therefore, so does the original factor system.
\end{proof}

Let $A$ be an abelian diassociative algebra and let $(\vp\dd,\vp\vv,\vp\dd',\vp\vv',f\dd,f\vv)$ be a factor system of $A$ by $B$ which is equivalent to another factor system $(\psi\dd,\psi\vv,\psi\dd',\psi\vv',g\dd,g\vv)$. Since $A$ is abelian, all adjoint operators on $A$ are equal to zero. Thus, by the axioms of equivalence for factor systems, $\vp\dd = \psi\dd$, $\vp\vv=\psi\vv$, $\vp\dd' = \psi\dd'$, and $\vp\vv'=\psi\vv'$. We thus fix the first four maps of factor systems and narrow our focus to pairs of bilinear forms. Let $\Fact(B,A,\vp\dd,\vp\vv,\vp\dd',\vp\vv')$ denote the set of all pairs $(f\dd,f\vv)$ such that $(\vp\dd,\vp\vv,\vp\dd',\vp\vv',f\dd,f\vv)$ is a factor system and let $\T(B,A,\vp\dd,\vp\vv,\vp\dd',\vp\vv')$ denote the set of all pairs $(f\dd,f\vv)$ such that $(\vp\dd,\vp\vv,\vp\dd',\vp\vv',f\dd,f\vv)$ is a split factor system. For ease of notation, let $\vp$ denote the fixed tuple $(\vp\dd,\vp\vv,\vp\dd',\vp\vv')$ and let \[(\vp,f\dd,f\vv)\] denote the factor system $(\vp\dd,\vp\vv,\vp\dd',\vp\vv',f\dd,f\vv)$. We abbreviate the previous sets by $\Fact_{\vp}$ and $\T_{\vp}$ respectively and denote by \[\Ext(B,A,\vp\dd,\vp\vv,\vp\dd',\vp\vv')\] the set of equivalence classes $\Fact_{\vp}/\T_{\vp}$. For the rest of this section, we assume $A$ is abelian.

\begin{thm}\label{di abelian algebras}
If $A$ is abelian, then
\begin{enumerate}
    \item $\Fact(B,A,\vp\dd,\vp\vv,\vp\dd',\vp\vv')$ is an abelian diassociative algebra,
    \item $\T(B,A,\vp\dd,\vp\vv,\vp\dd',\vp\vv')$ is an ideal in $\Fact(B,A,\vp\dd,\vp\vv,\vp\dd',\vp\vv')$,
    \item factor systems $(\vp\dd,\vp\vv,\vp\dd',\vp\vv',f\dd,f\vv)$ and $(\vp\dd,\vp\vv,\vp\dd',\vp\vv',g\dd,g\vv)$ are equivalent if and only if $(f\dd,f\vv)$ and $(g\dd,g\vv)$ are in the same coset of $\Fact_{\vp}$ relative to $\T_{\vp}$,
    \item the quotient diassociative algebra $\Ext(B,A,\vp\dd,\vp\vv,\vp\dd',\vp\vv')$ is in one-to-one correspondence with the set of equivalence classes of extensions to which $\vp\dd$, $\vp\vv$, $\vp\dd'$, and $\vp\vv'$ belong.
\end{enumerate}
\end{thm}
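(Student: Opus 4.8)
The plan is to follow the Leibniz proof of Theorem~\ref{abelian algebras} block by block, exploiting the collapse that abelianness of $A$ forces: every adjoint operator $\ad\dd^l$, $\ad\dd^r$, $\ad\vv^l$, $\ad\vv^r$ is zero, and both products on $A$ vanish identically. For part~1 I would fix the tuple $\vp$, take pairs $(f\dd,f\vv)$ and $(g\dd,g\vv)$ in $\Fact_{\vp}$ together with a scalar $c$, and show that $(\vp,f\dd-cg\dd,f\vv-cg\vv)$ is again a factor system. The defining identities sort into three types. Those carrying no $f$-term are automatic because $\vp$ is held fixed. Those in which $f$ enters only through a product against an element of $A$ — the $(c)$ and $(e)$ identities in each block, e.g.\ the terms $m\dashv f\dd(j,k)$, $f\vv(i,j)\dashv p$, and $f\dd(i,j)\vdash p$ — vanish outright and so reduce to constraints on $\vp$ alone, already met. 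The remaining cocycle identities $1(g)$, $2(g)$, $3(g)$, $4(g)$, $5(g)$ are linear in the pair $(f\dd,f\vv)$ and hence survive the linear combination. This makes $\Fact_{\vp}$ a vector space; since all products on $A$ are zero the pointwise products $f\dashv g$ and $f\vdash g$ vanish, so $\Fact_{\vp}$ is an abelian diassociative algebra.

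For part~2 it suffices to check that $\T_{\vp}$ is a subspace, the ambient multiplication being trivial. Given split pairs $(f\dd,f\vv)$ and $(g\dd,g\vv)$, Theorem~\ref{di split cond}(c) supplies linear maps $E_1,E_2\colon B\xrightarrow{} A$ realizing them; when $A$ is abelian the quadratic terms $E(i)\dashv E(j)$ and $E(i)\vdash E(j)$ drop out, rendering each realization formula linear in its map. Setting $E=E_1-cE_2$ then exhibits $(\vp,f\dd-cg\dd,f\vv-cg\vv)$ as split through Theorem~\ref{di split cond}(c) once more, giving closure. Part~3 uses the same reduction: if two factor systems are equivalent via $E$, then equivalence axioms~5 and~6 say precisely that $(f\dd-g\dd,f\vv-g\vv)$ satisfies the split criterion of Theorem~\ref{di split cond}(c), so the pairs lie in one coset; conversely, a map $E$ witnessing $(f\dd-g\dd,f\vv-g\vv)\in\T_{\vp}$ satisfies equivalence axioms~5 and~6, while axioms~1 through~4 hold automatically since every adjoint term vanishes and thus $\psi=\vp$. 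Part~4 is then immediate from Theorem~\ref{di thm 3} and part~3: two classes in $\Ext(B,A,\vp\dd,\vp\vv,\vp\dd',\vp\vv')$ agree exactly when their factor systems are equivalent, which Theorem~\ref{di thm 3} converts into equivalence of the corresponding extensions.

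The computations are routine once the abelian simplifications are applied, so the real work is organizational. With six component maps and five blocks of seven identities, the hardest part will be tracking which identities genuinely constrain $(f\dd,f\vv)$ and which collapse under abelianness, and carrying the signs in the split and equivalence formulas for $f\dd$ and $f\vv$ through in parallel. I expect no step to introduce an obstruction absent from the Leibniz argument.
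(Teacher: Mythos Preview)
Your proposal is correct and follows essentially the same approach as the paper: verify $\Fact_{\vp}$ is a vector space by checking the identities linear in $(f\dd,f\vv)$ survive linear combination while the rest collapse under abelianness, use Theorem~\ref{di split cond}(c) with $E=E_1-cE_2$ for the subspace property of $\T_{\vp}$, reduce part~3 to the last two equivalence axioms, and invoke Theorem~\ref{di thm 3} for part~4. One small slip in your labeling: the $(e)$ identities in blocks~1--3 carry no $f$-term at all, so the identities where $f$ appears only via a product in $A$ are the $(c)$ identities throughout together with $4(e)$ and $5(e)$; this does not affect your argument.
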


\begin{proof}
For $(f\dd,f\vv)$ and $(g\dd,g\vv)$ in $\Fact_{\vp}$, one verifies $(f\dd - cg\dd, f\vv-cg\vv)\in \Fact_{\vp}$ via the axioms of the factor systems $(\vp,f\dd,f\vv)$ and $(\vp,g\dd,g\vv)$ and the fact that multiplication in $A$ is trivial. For the second statement, it suffices to verify that $\T_{\vp}$ is a subspace. Consider elements $(f\dd,f\vv)$ and $(g\dd,g\vv)$ in $\T_{\vp}$, which form split factor systems $(\vp,f\dd,f\vv)$ and $(\vp,g\dd,g\vv)$ respectively. By Theorem \ref{di split cond}, there exist linear transformations $E_f,E_g:B\xrightarrow{} A$ such that \begin{enumerate}[itemsep=2pt]
    \item[] $f\dd(i,j) = -\vp\dd'(j)E_f(i) - \vp\dd(i)E_f(j) - E_f(i)\dashv E_f(j) + E_f(i\dashv j)$,
    \item[] $f\vv(i,j) = -\vp\vv'(j)E_f(i) - \vp\vv(i)E_f(j) - E_f(i)\vdash E_f(j) + E_f(i\vdash j)$,
\end{enumerate}
and
\begin{enumerate}[itemsep=2pt]
    \item[] $g\dd(i,j) = -\vp\dd'(j)E_g(i) - \vp\dd(i)E_g(j) - E_g(i)\dashv E_g(j) + E_g(i\dashv j)$,
    \item[] $g\vv(i,j) = -\vp\vv'(j)E_g(i) - \vp\vv(i)E_g(j) - E_g(i)\vdash E_g(j) + E_g(i\vdash j)$.
\end{enumerate} Letting $E = E_f-cE_g$, one has \begin{enumerate}[itemsep=2pt]
    \item[] $(f\dd - cg\dd)(i,j) = -\vp\dd'(j)E(i) -\vp\dd(i)E(j) - E(i)\dashv E(j) + E(i\dashv j)$,
    \item[] $(f\vv - cg\vv)(i,j) = -\vp\vv'(j)E(i) -\vp\vv(i)E(j) - E(i)\vdash E(j) + E(i\vdash j)$
\end{enumerate} which implies that $(\vp,f\dd-cg\dd, f\vv-cg\vv)$ splits. For the third statement, one observes that the last two axioms of equivalence for factor systems hold if and only if the third condition of Theorem \ref{di split cond} holds for the factor system $(\vp,f\dd-g\dd, f\vv-g\vv)$. Since $A$ is abelian, adjoint operators on $A$ are trivial. The final statement holds as in the Leibniz analogue.
\end{proof}

\begin{thm}\label{di central conditions}
$(\vp\dd,\vp\vv,\vp\dd',\vp\vv',f\dd, f\vv)$ is central if and only if $\vp\dd,\vp\vv,\vp\dd',\vp\vv'=0$.
\end{thm}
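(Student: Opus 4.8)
The plan is to mirror the Leibniz proof of Theorem~\ref{central conditions}, realizing the factor system concretely via Theorem~\ref{di thm 2}. By that theorem, $(\vp\dd,\vp\vv,\vp\dd',\vp\vv',f\dd,f\vv)$ belongs to the extension $0\xrightarrow{} A\xrightarrow{\sigma} L\xrightarrow{\pi} B\xrightarrow{} 0$ where $L = A\oplus B$ carries the two products displayed in Theorem~\ref{di thm 2} and $\sigma(m) = (m,0)$. Centrality of the extension is exactly the condition $\sigma(A)\subseteq Z(L)$, which for a diassociative algebra means that every element $(n,0)$ is annihilated on both sides by both products; that is, the four expressions $(m,i)\dashv(n,0)$, $(n,0)\dashv(m,i)$, $(m,i)\vdash(n,0)$, and $(n,0)\vdash(m,i)$ all vanish for every $m,n\in A$ and $i\in B$.

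First I would compute these four products using the multiplication formulas from Theorem~\ref{di thm 2}. The key simplifications are that $A$ is abelian (so $m\dashv n = m\vdash n = 0$), that linearity of the $\vp$ maps forces $\vp\dd'(0) = \vp\vv'(0) = \vp\dd(0) = \vp\vv(0) = 0$, that bilinearity of $f\dd,f\vv$ forces $f\dd(i,0) = f\vv(i,0) = f\dd(0,i) = f\vv(0,i) = 0$, and that $i\dashv 0 = 0\dashv i = i\vdash 0 = 0\vdash i = 0$ in $B$. After these cancellations each product collapses to a single surviving term: $(m,i)\dashv(n,0) = (\vp\dd(i)n,\,0)$, $(n,0)\dashv(m,i) = (\vp\dd'(i)n,\,0)$, $(m,i)\vdash(n,0) = (\vp\vv(i)n,\,0)$, and $(n,0)\vdash(m,i) = (\vp\vv'(i)n,\,0)$.

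From here the equivalence is immediate. Centrality holds if and only if each of these four terms is zero for all $n\in A$ and $i\in B$, which says precisely that $\vp\dd(i) = \vp\dd'(i) = \vp\vv(i) = \vp\vv'(i) = 0$ as operators on $A$ for every $i$, i.e.\ $\vp\dd = \vp\vv = \vp\dd' = \vp\vv' = 0$. Conversely, when all four maps vanish the four products are zero because $A$ is abelian, so $\sigma(A)\subseteq Z(L)$ and the extension is central.

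I do not anticipate a genuine obstacle here, since the argument is a direct four-product generalization of the two-product Leibniz computation; the only point requiring care is to confirm that the relevant notion of center for a diassociative algebra demands vanishing under both $\dashv$ and $\vdash$ on both sides, so that all four maps $\vp\dd,\vp\vv,\vp\dd',\vp\vv'$ are pinned down rather than only a subset of them.
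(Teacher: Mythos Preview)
Your proposal is correct and follows exactly the paper's approach: realize the factor system via Theorem~\ref{di thm 2}, note that centrality means the four products $(m,i)\dashv(n,0)$, $(n,0)\dashv(m,i)$, $(m,i)\vdash(n,0)$, $(n,0)\vdash(m,i)$ all vanish, and observe that this happens if and only if $\vp\dd,\vp\vv,\vp\dd',\vp\vv'=0$. Your write-up is simply more explicit about the intermediate simplifications than the paper's two-line proof.
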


\begin{proof}
By Theorem \ref{di thm 2}, the factor system belongs to an extension $0\xrightarrow{} A\xrightarrow{} L\xrightarrow{} B\xrightarrow{} 0$ which is central if and only if $(m,i)\dashv(n,0) = (n,0)\dashv(m,i)= (m,i)\vdash(n,0) = (n,0)\vdash(m,i) = (0,0)$ for all $m,n\in A$ and $i\in B$. But this happens if and only if $\vp\dd,\vp\vv,\vp\dd',\vp\vv'=0$.
\end{proof}

\begin{thm}
The classes of central extensions of $A$ by $B$ form a diassociative algebra, denoted $\Cext(B,A)$.
\end{thm}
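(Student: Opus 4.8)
The plan is to obtain this result as an immediate consequence of the two preceding theorems, exactly as in the Leibniz analogue. First I would observe that centrality automatically forces $A$ to be abelian: if $\sigma(A)$ lies in the center of $L$, then $\sigma(m)\dashv\sigma(n) = \sigma(m)\vdash\sigma(n) = 0$ for all $m,n\in A$, and since $\sigma$ is an injective homomorphism this gives $m\dashv n = m\vdash n = 0$. Thus the standing hypothesis of Theorem \ref{di abelian algebras} is satisfied for any central factor system, and we may legitimately work within that framework.

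Next I would apply Theorem \ref{di central conditions}, which characterizes the central factor systems of $A$ by $B$ as precisely those for which $\vp\dd = \vp\vv = \vp\dd' = \vp\vv' = 0$. This pins down the relevant fixed tuple of maps to be the zero tuple, so that the central factor systems are exactly the members of $\Fact(B,A,0,0,0,0)$ considered modulo the splitting relation.

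Then I would invoke Theorem \ref{di abelian algebras} in the specialization $\vp\dd = \vp\vv = \vp\dd' = \vp\vv' = 0$. With $A$ abelian, that theorem gives that $\Fact(B,A,0,0,0,0)$ is an abelian diassociative algebra, that $\T(B,A,0,0,0,0)$ is an ideal in it, and that the quotient $\Ext(B,A,0,0,0,0)$ is a diassociative algebra in one-to-one correspondence with the equivalence classes of extensions to which these four zero maps belong. By Theorem \ref{di central conditions} these extensions are exactly the central ones, so I would set $\Cext(B,A) := \Ext(B,A,0,0,0,0)$ and conclude that the classes of central extensions of $A$ by $B$ form a diassociative algebra.

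I do not anticipate a genuine obstacle, since all the real work has been front-loaded into Theorems \ref{di central conditions} and \ref{di abelian algebras}; the only point deserving a moment's care is the preliminary remark that centrality forces $A$ to be abelian, which is precisely what licenses the application of Theorem \ref{di abelian algebras}.
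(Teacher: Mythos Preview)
Your proposal is correct and follows essentially the same approach as the paper: invoke Theorem \ref{di central conditions} to identify central factor systems with the zero tuple $(\vp\dd,\vp\vv,\vp\dd',\vp\vv') = (0,0,0,0)$, then apply Theorem \ref{di abelian algebras} to define $\Cext(B,A):=\Ext(B,A,0,0,0,0)$. Your preliminary remark that centrality forces $A$ to be abelian is correct but unnecessary here, since the paper already carries the standing assumption that $A$ is abelian throughout this subsection.
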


\begin{proof}
By Theorem \ref{di abelian algebras} and Theorem \ref{di central conditions}, said classes form a diassociative algebra $\Cext(B,A):= \Ext(B,A,0,0,0,0)$.
\end{proof}

\begin{thm}
Let $A$ and $B$ be abelian diassociative algebras and let $(\vp,f\dd, f\vv)$ be a central factor system of $A$ by $B$. Then $(\vp,f\dd, f\vv)$ belongs to an abelian extension if and only if $f\dd=0$ and $f\vv=0$.
\end{thm}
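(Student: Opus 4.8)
The plan is to follow the template of the Leibniz analogue, exploiting the two diassociative products in parallel. First I would invoke Theorem \ref{di central conditions} to record that, since $(\vp,f\dd,f\vv)$ is central, all four action maps vanish: $\vp\dd = \vp\vv = \vp\dd' = \vp\vv' = 0$. This strips both the belonging conditions and the multiplications of Theorem \ref{di thm 2} of every $\vp$-term, leaving only the underlying products and the factor maps $f\dd, f\vv$ to analyze.

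For the forward direction, I would suppose $(\vp,f\dd,f\vv)$ belongs to an abelian extension $0\xrightarrow{} A\xrightarrow{\sigma} \LL\xrightarrow{} B\xrightarrow{} 0$ together with a section $T$. The belonging axioms give $\sigma(f\dd(i,j)) = T(i)\dashv T(j) - T(i\dashv j)$ and $\sigma(f\vv(i,j)) = T(i)\vdash T(j) - T(i\vdash j)$ for all $i,j\in B$. Because $\LL$ is abelian, the leading products $T(i)\dashv T(j)$ and $T(i)\vdash T(j)$ both vanish; because $B$ is abelian, $i\dashv j = i\vdash j = 0$, so the remaining terms equal $T(0)=0$. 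Hence $\sigma(f\dd(i,j)) = \sigma(f\vv(i,j)) = 0$, and injectivity of $\sigma$ forces $f\dd = f\vv = 0$.

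For the converse, I would assume $f\dd = f\vv = 0$ and feed the factor system into the construction of Theorem \ref{di thm 2}, producing $\LL = A\oplus B$ with the two stated products. Since $A$ is abelian the products $m\dashv n$ and $m\vdash n$ vanish, centrality kills each $\vp$-term, the hypothesis removes $f\dd$ and $f\vv$, and $B$ abelian gives $i\dashv j = i\vdash j = 0$; thus $(m,i)\dashv(n,j) = (m,i)\vdash(n,j) = (0,0)$ for all arguments, so $\LL$ is abelian and the extension it determines is abelian.

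The computation is entirely routine once centrality has been applied, so there is no genuine obstacle; the only point requiring a little care is recognizing that one must use abelianness of \emph{both} $\LL$ and $B$ in the forward direction — the former to annihilate $T(i)\dashv T(j)$ and $T(i)\vdash T(j)$, the latter to force $T(i\dashv j) = T(i\vdash j) = 0$.
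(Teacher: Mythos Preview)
Your proposal is correct and follows essentially the same approach as the paper's own proof: invoke centrality to zero out the $\vp$-maps, use the belonging identities together with abelianness of $L$ and $B$ (and injectivity of $\sigma$) for the forward direction, and check that the Theorem~\ref{di thm 2} construction has trivial multiplications for the converse. The paper is terser but makes exactly the same moves.
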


\begin{proof}
Since $(\vp,f\dd, f\vv)$ is central, we know all $\vp$ maps are zero. In the forward direction, $(\vp,f\dd, f\vv)$ belongs to an abelian extension $0\xrightarrow{} A\xrightarrow{} L\xrightarrow{} B\xrightarrow{} 0$ and section $T$. Since $L$ and $B$ are both abelian, one has $\sigma(f\dd(i,j)) = T(i)\dashv T(j) - T(i\dashv j) = 0$ and $\sigma(f\vv(i,j)) = T(i)\vdash T(j) - T(i\vdash j) = 0$ for all $i,j\in B$. Conversely, if $f\dd$ and $f\vv$ are trivial, then the construction of $L$ in Theorem \ref{di thm 2} has trivial multiplications.
\end{proof}

\section{Factor Systems of Dendriform Algebras}

The dendriform versions of these results follow by the same logic as the diassociative case with the substitutions of $<$ and $>$ for multiplications $\dashv$ and $\vdash$ respectively. We note again that Zinbiel algebras are a special case of dendriform algebras, and the identities of factor systems condense along these lines.

\begin{defn}
Let $A$ and $B$ be dendriform algebras. A \textit{factor system} of $A$ by $B$ is a tuple $(\vp_<,\vp_>,\vp_<',\vp_>',f_<,f_>)$ of maps such that $\vp_<,\vp_>,\vp_<',\vp_>':B\xrightarrow{} \mathscr{\LL}(A)$ are linear, $f_<,f_>:B\times B\xrightarrow{} A$ are bilinear, and the following three sets of identities are satisfied for all $m,n,p\in A$ and $i,j,k\in B$:
\begin{enumerate}[topsep=1pt, partopsep=0pt, parsep = 1pt]
    \item Those resembling E1:
\begin{enumerate}[topsep=0pt, noitemsep, partopsep=0pt, parsep=0pt]
    \item $(\vp_<(i)n)<p = \vp_<(i)(n<p) + \vp_<(i)(n>p)$
    \item $(\vp_<'(j)m)<p = m<(\vp_<(j)p) + m<(\vp_>(j)p)$
    \item $f_<(i,j)<p + \vp_<(i<j)p = \vp_<(i)(\vp_<(j)p) + \vp_<(i)(\vp_>(j)p)$
    \item $\vp_<'(k)(m<n) = m<(\vp_<'(k)n) + m<(\vp_>'(k)n)$
    \item $\vp_<'(k)(\vp_<(i)n) = \vp_<(i)(\vp_<'(k)n) + \vp_<(i)(\vp_>'(k)n)$
    \item $\vp_<'(k)(\vp_<'(j)m) = m<f_<(j,k) + \vp_<'(j<k)m + m<f_>(j,k) + \vp_<'(j>k)m$
    \item $\vp_<'(k)f_<(i,j) + f_<(i<j,k) = \vp_<(i)f_<(j,k) + f_<(i,j<k) + \vp_<(i)f_>(j,k) + f_<(i,j>k)$
\end{enumerate}

\item Those resembling E2:
\begin{enumerate}[topsep=0pt, noitemsep, partopsep=0pt, parsep=0pt]
    \item $(\vp_>(i)n)<p = \vp_>(i)(n<p)$
    \item $(\vp_>'(j)m)<p = m>(\vp_<(j)p)$
    \item $f_>(i,j)<p + \vp_<(i>j)p = \vp_>(i)(\vp_<(j)p)$
    \item $\vp_<'(k)(m>n) = m>(\vp_<'(k)n)$
    \item $\vp_<'(k)(\vp_>(i)n) = \vp_>(i)(\vp_<'(k)n)$
    \item $\vp_<'(k)(\vp_>'(j)m) = \vp_>'(j<k)m + m>f_<(j,k)$
    \item $\vp_<'(k)f_>(i,j) + f_<(i>j,k) = \vp_>(i)f_<(j,k) + f_>(i,j<k)$
\end{enumerate}

\item Those resembling E3:
\begin{enumerate}[topsep=0pt, noitemsep, partopsep=0pt, parsep=0pt]
    \item $(\vp_<(i)n)>p + (\vp_>(i)n)>p = \vp_>(i)(n>p)$
    \item $(\vp_<'(j)m)>p + (\vp_>'(j)m)>p = m>(\vp_>(j)p)$
    \item $f_<(i,j)>p + \vp_>(i<j)p + f_>(i,j)>p + \vp_>(i>j)p = \vp_>(i)(\vp_>(j)p)$
    \item $\vp_>'(k)(m<n) + \vp_>'(k)(m>n) = m>(\vp_>'(k)n)$
    \item $\vp_>'(k)(\vp_<(i)n) + \vp_>'(k)(\vp_>(i)n) = \vp_>(i)(\vp_>'(k)n)$
    \item $\vp_>'(k)(\vp_<'(j)m) + \vp_>'(k)(\vp_>'(j)m) = m>f_>(j,k) + \vp_>'(j>k)m$
    \item $\vp_>'(k)f_<(i,j) + f_>(i<j,k) + \vp_>'(k)f_>(i,j) + f_>(i>j,k) = \vp_>(i)f_>(j,k) + f_>(i,j>k)$
\end{enumerate}
\end{enumerate}
\end{defn}

\begin{defn}
Let $A$ and $B$ be Zinbiel algebras. A \textit{factor system} of $A$ by $B$ is a tuple of maps $(\vp,\vp',f)$ where $\vp,\vp':B\xrightarrow{} \mathscr{L}(A)$ are linear, $f:B\times B\xrightarrow{} A$ is bilinear, and
\begin{enumerate}[itemsep=1pt]
    \item $(\vp(i)n)p = \vp(i)(np) + \vp(i)(pn)$
    \item $(\vp'(j)m)p = m(\vp(j)p) + m(\vp'(j)p)$
    \item $\vp'(k)(mn) = m(\vp(k)n) + m(\vp'(k)n)$
    \item $f(i,j)p + \vp(ij)p = \vp(i)(\vp(j)p)) + \vp(i)(\vp'(j)p)$
    \item $\vp'(k)(\vp(i)n) = \vp(i)(\vp(k)n) + \vp(i)(\vp'(k)n)$
    \item $\vp'(k)(\vp'(j)m) = mf(j,k) + mf(k,j) + \vp'(jk)m + \vp'(kj)m$
    \item $\vp'(k)f(i,j) + f(ij,k) = \vp(i)f(j,k) + \vp(i)f(k,j) + f(i,jk) + f(i,kj)$
\end{enumerate} are satisfied for all $m,n,p\in A$ and $i,j,k\in B$.
\end{defn}

\section{Cohomology}
Given a central extension $0\xrightarrow{} A\xrightarrow{} L\xrightarrow{} B\xrightarrow{} 0$ of $A$ by $B$, the general construction of cohomology for Leibniz algebras begins with the set $\cC^n(B,A)$ of $n$-linear maps $f:B\times\cdots \times B\xrightarrow{} A$. Elements of $\cC^n(B,A) = \text{Mult}(B\times \cdots\times B, A) \cong\Hom_{\F}(B^{\otimes n},A)$ are called \textit{$n$-cochains}. The usual Leibniz coboundary map $d^n:\cC^n(B,A)\xrightarrow{} \cC^{n+1}(B,A)$ simplifies (in the central case) to: \begin{align*}
    (d^nf)(x_1,\dots,x_{n+1}) = \sum_{1\leq i<j\leq n+1} (-1)^i f(x_1,\dots,\hat{x_i},\dots,x_{j-1},x_ix_j,x_{j+1}, \dots,x_{n+1})
\end{align*} for $f\in \cC^n(B,A)$. Note specifically that $(d^2f)(x,y,z) = - f(xy,z) + f(x,yz) - f(y,xz)$. We denote by $\cZ^n(B,A)$ the set of all $f\in \cC^n(B,A)$ such that $d^nf = 0$ (\textit{$n$-cocycles}) and by $\cB^n(B,A)$ the set of all $f\in \cC^n(B,A)$ such that $d^{n-1}E = f$ for some $E\in \cC^{n-1}(B,A)$ (\textit{$n$-coboundaries}). It is well known that $d^{n}\circ d^{n-1}=0$ and thus $\cB^n(B,A)\subseteq \cZ^n(B,A)$. Therefore $\cH^n(B,A) = \cZ^n(B,A)/\cB^n(B,A)$ is the \textit{$n$th cohomology group}. In particular, $\cH^2(B,A)$ arises from our theory of extensions and factor systems. First recall Theorem \ref{abelian algebras} and the construction $\Fact(B,A,0,0)$. Given a central extension $0\xrightarrow{} A\xrightarrow{} L\xrightarrow{} B\xrightarrow{} 0$, the axioms of the corresponding factor systems become trivial except for the seventh one, which reduces to $f(i,jk) = f(ij,k) + f(j,ik)$, the \textit{2-cocycle identity} for Leibniz algebras. Thus $\Fact(B,A,0,0)$ is the set of all bilinear maps $f:B\times B\xrightarrow{} A$ such that $d^2f = 0$. By Theorem \ref{conditions for split}, $\mathscr{T}(B,A,0,0)$ is the set of all bilinear $f:B\times B\xrightarrow{} A$ such that $f(i,j)=-E(ij)$ for some linear transformation $E:B\xrightarrow{} A$. These sets are thereby the 2-cocycles and 2-coboundaries of our cohomology respectively. In other words, $\cZ^2(B,A) = \Fact(B,A,0,0)$ and $\cB^2(B,A) = \T(B,A,0,0)$. Thus $\Cext(B,A)$ is the second cohomology group $\cH^2(B,A)$. We refer the reader to Loday's \cite{loday dialgebras} for constructions of (co)homology in the diassociative and dendriform settings.

In conclusion, each set of 2-cocycle identities is the central simplification of the identities for factor systems. The following table lists these identities for each $\alg$ algebra, as well as the total numbers $\mu(\alg)$ of factor system identities. By construction, each cocycle formulation resembles the defining identities of the corresponding $\alg$ structure.
\bigskip

\begin{tabular}{c|c|c|c}
$\alg$ & $\mu(\alg)$ & 2-cocycle form & 2-cocycle Identities \\ \hline
    Associative & 7 & $f$ & $f(ij,k) = f(i,jk)$ \\ \hline Leibniz & 7 & $f$ & $f(i,jk) = f(ij,k) + f(j,ik)$ \\ \hline Zinbiel & 7& $f$ & $f(ij,k) = f(i,jk) + f(i,kj)$
    \\ \hline Diassociative & 35 & $(f\dd,f\vv)$ & $f\dd(i,j\dashv k) = f\dd(i,j\vdash k)$ \\ &&& $f\dd(i\vdash j,k) = f\vv(i,j\dashv k)$ \\ &&& $f\vv(i\dashv j,k) = f\vv(i\vdash j,k)$ \\ &&& $f\dd(i,j\dashv k) = f\dd(i\dashv j,k)$ \\ &&& $f\vv(i,j\vdash k) = f\vv(i\vdash j,k)$ \\ \hline Dendriform & 21 & $(f_<,f_>)$ & $f_<(i<j,k) = f_<(i,j<k) + f_<(i,j>k)$ \\ &&& $f_<(i>j,k) = f_>(i,j<k)$ \\ &&& $f_>(i<j,k) + f_>(i>j,k) = f_>(i,j>k)$ \\ \hline Lie & 3 & $f$ & $f(i,j) = -f(j,i)$ \\ &&& $f([i,j],k) + f([j,k],i) + f([k,i],j) = 0$ \\ \hline Commutative & 4 & $f$ & $f(i,j) = f(j,i)$ \\ &&& $f(ij,k) = f(i,jk)$
\end{tabular}

\section*{Acknowledgements}
The author would like to thank Ernest Stitzinger for the many helpful discussions.

\end{document}